 \newtheorem{thm}{Theorem}[section]
 \newtheorem{cor}[thm]{Corollary}
 \newtheorem{lem}[thm]{Lemma}
 \theoremstyle{definition}
 \newtheorem{defn}[thm]{Definition}
 \theoremstyle{remark}
 \newtheorem{rem}[thm]{Remark}
 \newtheorem*{ex}{Example}
 \numberwithin{equation}{section}
\def\bc{\begin{center}}
      \def\ec{\end{center}}
 \def\bel{\begin{equation}}
 \def\enl{\end{equation}}
 \def\be{\begin{eqnarray*}}
 \def\en{\end{eqnarray*}}
 \def\R{{\bf R}}
\def\i{{\bf i}}
\def\j{{\bf j}}
\def\k{{\bf k}}
 \def\H{\mathcal{H}}
 \def\N{{\bf N}}
 \def\uq{\underline{q}}
\def\up{\underline{p}}
\def\i{{\bf i}}
 \def\en{{\bf e}_n}
 \def\j{{\bf j}}
 \def\k{{\bf k}}
 \def\uq{\underline{q}}
 \newcommand{\argmin}{\mathop{{\rm arg}\min}}
\begin{document}

%
%
%
%
%
%

\title[Discrete Uncertainty Principle in Quaternion Setting]
 {Discrete Uncertainty Principle in Quaternion Setting and Application in Signal Reconstruction}

\author{Yan Yang}
\address{School
of Mathematics(Zhuhai), Sun Yat-Sen University,\\
 Zhuhai, 519082, Guangzhou, P.R.China.}

\email{mathyy@sina.com}


\author{Kit~Ian Kou*}
\address{Department of Mathematics, Faculty of Science and Technology,\\
University of Macau, Taipa, Macao, China.}
\email{kikou@umac.mo}
\thanks{*Corresponding author, kikou@umac.mo}

\author{Cuiming Zou}\address{School of Information Science and Engineering,\\
Chengdu University, Chengdu, 610106, China.}
\email{zoucuiming2006@163.com}

\subjclass{Primary 42B10; Secondary 94A12}

\keywords{Discrete uncertainty principle;
Signal reconstruction;
Quaternion Fourier transform.}


\begin{abstract}
In this paper, the uncertainty principle of discrete signals associated with Quaternion Fourier transform is investigated. It suggests how sparsity helps in the recovery of missing frequency.
\end{abstract}

\maketitle
\section{Introduction}
The classical uncertainty principle (the continuous-time uncertainty principle)
says that if a function $f(t)$ is essentially zero outside an interval of length $\Delta_t$ and its Fourier transform $\hat{f}(\omega)$ defined by
$$\hat{f}(\omega)=\int_{-\infty}^{\infty}f(t)e^{-2\pi\i t\omega}dt$$
is essentially zero outside an interval of length $\Delta_{\omega}$, then
$$\Delta_t\Delta_{\omega}\geq 1.$$
In mathematics, that means a function and its Fourier transform cannot both be higher concentrated.
Uncertainty principle was first introduced by Werner Heisenberg in quantum mechanics \cite{H1927}, which plays an important role in physics and engineering over the past century. Recently, uncertainty principle was applied to signal processing by Donoho and Stark \cite{DS1989}, Candes and Tao \cite{CT2006}, Tropp \cite{T2008}, and Bandeira, Lewis, and Mixon \cite{BLM2017}.
In \cite{DS1989}, the authors first gave the uncertainty principles of discrete 1D signals. It states that: if $\{x_t\}_{t=0}^{N-1}$ is a sequence of length $N$ and $\{\hat{x}_{\omega}\}_{\omega=0}^{N-1}$ is the sequence of its discrete Fourier transform, which is defined by $$\hat{x}_{\omega}:=\frac{1}{\sqrt{N}}\sum_{t=0}^{N-1}x(t)e^{-2\pi\i \frac{\omega t}{N}}, \quad \mbox{ for } \omega=0, 1, \cdots, N-1.$$
Then
\begin{eqnarray}\label{yk5eq111}
N_t {N}_{\omega}\geq N,
\end{eqnarray}
where $\{x_t\}$ is nonzero at $N_t$ points and $\{\hat{x}_{\omega}\}$ is nonzero at ${N}_{\omega}$ points.
In \cite{CT2006}, the uniform uncertainty principle was obtained and which played an crucial role in compressed sensing. The discrete uncertainty principles with applications on sparse signal processing was investigated in \cite{BLM2017}. To the authors' knowledge, the higher dimensional investigation was first considered in \cite{K2017}, inspired by Donoho and Startk's uncertainty principle \cite{DS1989}, the authors \cite{K2017} study the uncertainty principle and signal recovery for continuous quaternion-valued signals.

The quaternion Fourier transform (QFT) plays a vital role in the
representation of 2D signals. It is an extension of Fourier
transform (FT) to the quaternion algebra, which was first proposed by Ell
\cite{E1993}. It transforms a real (or
quaternionic) 2D signal into a quaternion-valued frequency domain
signal. The four components of the QFT separate four cases of
symmetry into real signals instead of only two as in the complex FT. The QFT has wide range of application, such as color image analysis \cite{B1999, SE2007}, color image digital watermarking scheme \cite{BLC2003}, image pre-processing and neural
computing techniques for speech recognition \cite{BTN2007}, envelope \cite{KLMZ2017} and edge detectors \cite{HK2018} of color images.

In this paper, we study a novel discrete uncertainty principle associated with the QFT
and discuss its application to signal recovery. The main contributions of this paper are summarized as follows.

\begin{itemize}

\item[1.] The discrete case of uncertainty principle associated with Quaternion Fourier transform is established to give the relationship between the nonzero numbers of the discrete quaternion-valued signals and their QFTs.

\item[2.] The discrete uncertainty principle suggests how sparsity helps in the recovery of missing frequencies.
\end{itemize}

The article is organized as follows. The Quaternion algebra and
Quaternion Fourier transform are reviewed in
Section \ref{S2}. The uncertainty principle of discrete 2D signals is
obtained for two-sided discrete Quaternion Fourier transform in Section \ref{S3}.
In Section \ref{S4}, the discussion for application of uncertainty principles in spare signal recovery is investigated.
\section{Preliminaries}\label{S2}

The quaternion algebra $\mathcal{H}$ was first invented by W.
R. Hamilton in 1843 for extending complex numbers to a 4D algebra
\cite{S1979}. A quaternion $q\in \mathcal{H}$ can be written in this
form
$$q=q_0+\uq=q_0+\i q_1+\j q_2+\k q_3, \; q_k\in \R, \; k=0, 1, 2, 3,$$
where $\i, \j, \k$ satisfy Hamilton's multiplication rules
$$\i^2=\j^2=\k^2=-1, \i\j=-\j\i=\k, $$
$$\j\k=-\k\j=\i, \k\i=-\i\k=\j.$$

Applying the Hamilton's multiplication rules, the multiplication of two
quaternions $p=p_0+\up$ and $q=q_0+\uq$ can be expressed by
$$pq=p_0q_0+\up\cdot\uq+p_0\uq+q_0 \up+\up\times\uq,$$
where $$\up\cdot\uq :=-(p_1q_1+p_2q_2+p_3q_3)$$ and
$$\up\times\uq :=\i(p_3q_2-p_2q_3)+\j(p_1q_3-p_3q_1)+\k(p_2q_1-p_1q_2).$$

We define the conjugation of $q\in \mathcal{H}$ by $\overline{q} :=q_0-\i
q_1-\j q_2-\k q_3$. Clearly,
$q\bar{q}=q_0^2+q_1^2+q_2^2+q_3^2.$ The modulus of a quaternion
$q$ is defined by
$$|q| :=\sqrt{q\bar{q}}=\sqrt{q_0^2+q_1^2+q_2^2+q_3^2}.$$

In this paper, we study the quaternion-valued signal $f:
\R^2\rightarrow \H$ which can be expressed by
$$f(t, s)=f_0(t, s)+\i f_1(t, s)+\j f_2(t, s)+\k f_3(t, s),$$ where $f_k, (k=0, 1, 2, 3)$ are real-valued functions. 

In 1997, Sangwine \cite{S1997} defined the fundamental idea of a discrete Quaternion Fourier transform (DQFT) and inversion discrete Quaternion Fourier transform (IDQFT) of Quaternion-valued signals, which we recall next.

\begin{defn}\label{Def2.1} {\bf (DQFT and IDQFT)}
Let $\{f(t,s)\}$ $(t=0, 1, \cdots, M-1, s=0, 1, \cdots, N-1)$ be a sequence of length $MN$ and $\{\hat{f}(u, v)\}$ $(u=0, 1, \cdots, M-1, v=0, 1, \cdots, N-1)$ be its two-sided discrete Quaternion Fourier transform (DQFT), which is defined by
\begin{equation}\label{yk5eq1}
\hat{f}(u, v) :=\frac{1}{\sqrt{MN}}\sum_{t=0}^{M-1}\sum_{s=0}^{N-1}e^{-2\pi\i\frac{ ut}{M}}f(t, s)e^{-2\pi\j\frac{ vs}{N}}.
\end{equation}

Moreover, the inverse discrete Quaternion Fourier transform (IDQFT) of $\{\hat{f}(u, v)\}$ is defined by
\begin{equation}\label{yk5eq2}
f(t, s) :=\frac{1}{\sqrt{MN}}\sum_{u=0}^{M-1}\sum_{v=0}^{N-1}e^{2\pi\i\frac{ ut}{M}}\hat{f}(u, v)e^{2\pi\j\frac{ vs}{N}}.
\end{equation}
\end{defn}

As a consequence of Definition \ref{Def2.1}, formula (\ref{yk5eq1}) can be represented in the matrix form. Denote two $M \times N$ matrices
\begin{eqnarray*}
A 
&:=&\left(
\begin{array}{cccc}
f(0, 0) & f(0, 1) & \cdots & f(0, N-1)\\
f(1, 0) & f(1, 1) & \cdots & f(1, N-1)\\
\vdots  & \vdots  & \vdots & \vdots   \\
f(M-1, 0) & f(M-1, 1) & \cdots & f(M-1, N-1)
\end{array}
\right)\\
&=&\left( f(t,s) \right) \in \mathcal{H}^{M \times N}
\end{eqnarray*}
and
\begin{eqnarray*}
\hat{A}
&:=&\left(
\begin{array}{cccc}
\hat{f}(0, 0) & \hat{f}(0, 1) & \cdots & \hat{f}(0, N-1)\\
\hat{f}(1, 0) & \hat{f}(1, 1) & \cdots & \hat{f}(1, N-1)\\
\vdots  & \vdots  & \vdots & \vdots   \\
\hat{f}(M-1, 0) & \hat{f}(M-1, 1) & \cdots & \hat{f}(M-1, N-1)
\end{array}
\right)\\
&=&\left(\hat{f}(u,v) \right) \in \mathcal{H}^{M \times N},
\end{eqnarray*}
then formula (\ref{yk5eq1}) can be expressed as
$$\hat{A}=V_{\i}AV_{\j},$$
where $V_{\i}$ and $V_{\j}$ are $M \times M$ and $N \times N$ Vandermonde matrices, which are defined by
\begin{eqnarray*}
V_{\i} :=\frac{1}{\sqrt{M}}\left(
\begin{array}{cccc}
1 & 1 & \cdots & 1\\
1 & e^{-2\pi\i\frac{1}{M}} & \cdots & e^{-2\pi\i\frac{ (M-1)}{M}}\\
\vdots  & \vdots  & \vdots & \vdots   \\
1 & e^{-2\pi\i\frac{ (M-1)}{M}}& \cdots & e^{-2\pi\i\frac{(M-1)^2}{M}}
\end{array}
\right)
\end{eqnarray*}
and
\begin{eqnarray*}
V_{\j} :=\frac{1}{\sqrt{N}}\left(
\begin{array}{cccc}
1 & 1 & \cdots & 1\\
1 & e^{-2\pi\j\frac{1}{N}} & \cdots & e^{-2\pi\j\frac{ (N-1)}{N}}\\
\vdots  & \vdots  & \vdots & \vdots   \\
1 & e^{-2\pi\j\frac{ (N-1)}{N}}& \cdots & e^{-2\pi\j\frac{(N-1)^2}{N}}
\end{array}
\right),
\end{eqnarray*} respectively.
Clearly, they are non-singular matrices.

Similarly, formula (\ref{yk5eq2}) can be expressed as
\begin{equation}\label{yk5eq3}
A=V_{\i}^{-1}\hat{A}V_{\j}^{-1}=V_{-\i}\hat{A}V_{-\j}.
\end{equation}

\section{The Discrete Uncertainty Principle in Quaternion Setting}\label{S3}
Uncertainty principle has a significant role in both science and engineering for most
of the past century. In this section, we show that the discrete uncertainty principle of quaternion-valued signals.
\begin{thm}[Main Theorem] \label{yk5th1}
Let $N_{(t, s)}$ and $N_{(u, v)}$ be the numbers of nonzero elements of sequences $\{f(t, s)\}$ $(t=0, 1, \cdots, M-1, s=0, 1, \cdots, N-1)$ and $\{\hat{f}(u, v)\}$ $(u=0, 1, \cdots, M-1, v=0, 1, \cdots, N-1)$, respectively. Then we have
\begin{equation}\label{yk5eq3.5}
N_{(t, s)}\cdot N_{(u, v)}\geq MN.
\end{equation}
\end{thm}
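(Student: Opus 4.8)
The plan is to mimic the classical Donoho--Stark argument from \cite{DS1989}, but carried out carefully in the noncommutative quaternion setting. Fix a pair $(t_0,s_0)$ at which $f(t_0,s_0)\neq 0$. From the inversion formula (\ref{yk5eq2}) we write
\begin{equation*}
f(t_0, s_0)=\frac{1}{\sqrt{MN}}\sum_{u=0}^{M-1}\sum_{v=0}^{N-1}e^{2\pi\i\frac{ut_0}{M}}\hat{f}(u,v)e^{2\pi\j\frac{vs_0}{N}},
\end{equation*}
where only the $N_{(u,v)}$ nonzero terms $\hat{f}(u,v)$ contribute. Taking moduli, using $|pq|=|p||q|$ for quaternions and the fact that $|e^{2\pi\i\theta}|=|e^{2\pi\j\theta}|=1$, we get
\begin{equation*}
|f(t_0,s_0)|\le\frac{1}{\sqrt{MN}}\sum_{(u,v)\,:\,\hat{f}(u,v)\neq 0}|\hat{f}(u,v)|\le\frac{N_{(u,v)}}{\sqrt{MN}}\max_{u,v}|\hat{f}(u,v)|.
\end{equation*}
Since $(t_0,s_0)$ was an arbitrary nonzero position, this gives $\max_{t,s}|f(t,s)|\le\frac{N_{(u,v)}}{\sqrt{MN}}\max_{u,v}|\hat{f}(u,v)|$.

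Next I would run the completely symmetric argument on the forward transform (\ref{yk5eq1}): for any $(u_0,v_0)$,
\begin{equation*}
|\hat{f}(u_0,v_0)|\le\frac{1}{\sqrt{MN}}\sum_{(t,s)\,:\,f(t,s)\neq 0}|f(t,s)|\le\frac{N_{(t,s)}}{\sqrt{MN}}\max_{t,s}|f(t,s)|,
\end{equation*}
so $\max_{u,v}|\hat{f}(u,v)|\le\frac{N_{(t,s)}}{\sqrt{MN}}\max_{t,s}|f(t,s)|$. Chaining the two inequalities yields
\begin{equation*}
\max_{t,s}|f(t,s)|\le\frac{N_{(u,v)}N_{(t,s)}}{MN}\max_{t,s}|f(t,s)|,
\end{equation*}
and since $f$ is not identically zero (otherwise the statement is vacuous, or one notes $N_{(t,s)}=N_{(u,v)}=0$ is excluded) the maximum on both sides is a positive real number, which can be cancelled to give $N_{(t,s)}N_{(u,v)}\ge MN$.

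The only genuine subtlety — the thing I would be most careful about — is the multiplicativity of the modulus in the quaternion algebra: one needs $|e^{-2\pi\i\theta_1}\,q\,e^{-2\pi\j\theta_2}|=|q|$, which follows from $|pq|=|p|\,|q|$ (valid for all $p,q\in\mathcal{H}$ since $\mathcal{H}$ is a normed division algebra) together with $|e^{-2\pi\i\theta}|^2 = \cos^2(2\pi\theta)+\sin^2(2\pi\theta)=1$, and similarly for the $\j$-exponential. Everything else is the triangle inequality and a counting bound, exactly as in the scalar case; the noncommutativity is harmless because the argument only ever uses absolute values. A secondary point worth a sentence is to handle the degenerate case: if $f\equiv 0$ then both counts are $0$ and (\ref{yk5eq3.5}) is interpreted trivially, so we may assume $\max_{t,s}|f(t,s)|>0$, which is what licenses the final cancellation.
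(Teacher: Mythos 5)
Your argument is correct, and it is a genuinely different --- and considerably more elementary --- route than the one taken in the paper. You run the classical Donoho--Stark $\ell^\infty$--$\ell^1$ chaining: $\max_{t,s}|f|\leq \frac{N_{(u,v)}}{\sqrt{MN}}\max_{u,v}|\hat f|$ from the inversion formula (\ref{yk5eq2}), the reverse bound $\max_{u,v}|\hat f|\leq \frac{N_{(t,s)}}{\sqrt{MN}}\max_{t,s}|f|$ from (\ref{yk5eq1}), and then cancellation of $\max_{t,s}|f|>0$. The only quaternionic subtlety is exactly the one you isolate: $|e^{-2\pi\i\theta_1}\,q\,e^{-2\pi\j\theta_2}|=|q|$ because $\mathcal H$ is a normed division algebra and the two exponentials are unit quaternions; noncommutativity never enters since you only ever estimate moduli. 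The paper instead proceeds combinatorially: it partitions the frequency plane into consecutive $m\times m$ submatrices with $m=[\sqrt{N_{(t,s)}}]$, shows via Lemmas \ref{yk5lem1}, \ref{yk5lem2} and \ref{yk5lem13} that each such block must contain at least one (or, when $m$ is not an exact square root, at least two) nonzero entries of $\hat f$, and then counts blocks using the elementary inequality $[\sqrt{N}]^2\leq 2N$ of Lemma \ref{yk5lem3}. Your approach buys brevity and robustness --- two triangle-inequality estimates and one division, with no case analysis on whether $\sqrt{N_{(t,s)}}$ is an integer or exceeds $\min\{M,N\}$ --- whereas the paper's block-localization argument, at the cost of several lemmas and cases, yields the extra structural information that the nonzero frequencies cannot all cluster in a single small window. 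One housekeeping point you already noticed and should keep: the theorem implicitly assumes $f\not\equiv 0$ (otherwise $N_{(t,s)}=N_{(u,v)}=0$ and (\ref{yk5eq3.5}) fails), and it is precisely this assumption that licenses the final cancellation of $\max_{t,s}|f(t,s)|$.
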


By the arithmetic mean-Geometric mean inequality, which we describe next.
\begin{cor}\label{yk5th2}
$$N_{(t, s)} + N_{(u, v)}\geq 2\sqrt{MN}.$$
\end{cor}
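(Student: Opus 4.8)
The plan is to mimic the classical Donoho--Stark argument, adapting it to the non-commutative quaternionic setting via the matrix factorization $\hat A = V_{\i} A V_{\j}$ established above. First I would write the inversion formula (\ref{yk5eq3}) entrywise: for each $(t,s)$,
\begin{equation*}
f(t,s) = \frac{1}{\sqrt{MN}}\sum_{u=0}^{M-1}\sum_{v=0}^{N-1} e^{2\pi\i\frac{ut}{M}}\,\hat f(u,v)\, e^{2\pi\j\frac{vs}{N}}.
\end{equation*}
Taking modulus and using $|pq|=|p||q|$ together with $|e^{2\pi\i\theta}|=|e^{2\pi\j\theta}|=1$, I get the pointwise bound $|f(t,s)| \le \frac{1}{\sqrt{MN}}\sum_{u,v}|\hat f(u,v)|$, where the sum runs only over the $N_{(u,v)}$ indices on which $\hat f$ is nonzero. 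Hence
\begin{equation*}
\max_{t,s}|f(t,s)| \le \frac{1}{\sqrt{MN}}\sum_{(u,v):\,\hat f(u,v)\ne 0}|\hat f(u,v)| \le \frac{N_{(u,v)}}{\sqrt{MN}}\,\max_{u,v}|\hat f(u,v)|.
\end{equation*}

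Next I would run the same estimate in the forward direction using (\ref{yk5eq1}): for each $(u,v)$, $|\hat f(u,v)| \le \frac{1}{\sqrt{MN}}\sum_{(t,s):\,f(t,s)\ne 0}|f(t,s)| \le \frac{N_{(t,s)}}{\sqrt{MN}}\max_{t,s}|f(t,s)|$, so in particular $\max_{u,v}|\hat f(u,v)| \le \frac{N_{(t,s)}}{\sqrt{MN}}\max_{t,s}|f(t,s)|$. Substituting this into the previous display yields
\begin{equation*}
\max_{t,s}|f(t,s)| \le \frac{N_{(u,v)}}{\sqrt{MN}}\cdot\frac{N_{(t,s)}}{\sqrt{MN}}\,\max_{t,s}|f(t,s)| = \frac{N_{(t,s)}N_{(u,v)}}{MN}\,\max_{t,s}|f(t,s)|.
\end{equation*}
Assuming $f$ is not identically zero (the inequality being trivial otherwise), $\max_{t,s}|f(t,s)|>0$, so I may divide through by it to obtain $N_{(t,s)}N_{(u,v)}\ge MN$, which is (\ref{yk5eq3.5}). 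The corollary then follows immediately by applying the AM--GM inequality $a+b\ge 2\sqrt{ab}$ with $a=N_{(t,s)}$ and $b=N_{(u,v)}$, since $\sqrt{N_{(t,s)}N_{(u,v)}}\ge\sqrt{MN}$.

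The only genuinely delicate point is bookkeeping in the non-commutative algebra: I must be careful that the exponential kernels sit on opposite sides of $f$ (one with $\i$ on the left, one with $\j$ on the right), so the triangle inequality and multiplicativity of $|\cdot|$ on $\H$ must be applied to products of the form $e^{2\pi\i\theta}\,\hat f(u,v)\,e^{2\pi\j\phi}$ without ever reordering factors; fortunately $|pqr|=|p|\,|q|\,|r|$ holds in $\H$ regardless of order, so this causes no real trouble. A secondary subtlety is merely notational: $N$ is overloaded as both a dimension and part of the phase $e^{-2\pi\j vs/N}$, but this does not affect the argument. No sharper tool (Plancherel, rank arguments, or the non-degeneracy of the Vandermonde matrices) is needed for this qualitative bound, though those would be required for the equality/uniqueness discussion in Section \ref{S4}.
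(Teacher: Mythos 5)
Your argument is correct, and its final step (AM--GM applied to the product bound) is exactly the paper's entire proof of this corollary; where you genuinely diverge is in how you obtain the prerequisite product bound $N_{(t,s)}\cdot N_{(u,v)}\ge MN$. You reprove Theorem \ref{yk5th1} via the classical Donoho--Stark $\ell^{\infty}$--$\ell^{1}$ chain: $\max|f|\le \frac{N_{(u,v)}}{\sqrt{MN}}\max|\hat f|$ from the inversion formula, $\max|\hat f|\le \frac{N_{(t,s)}}{\sqrt{MN}}\max|f|$ from the forward transform, then division by $\max|f|>0$. This transfers verbatim to $\mathcal{H}$ because, as you note, the quaternion modulus is multiplicative and the kernels $e^{2\pi\i\theta}$, $e^{2\pi\j\phi}$ are unimodular, so the non-commutativity is harmless. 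The paper instead proves Theorem \ref{yk5th1} combinatorially: Lemmas \ref{yk5lem1}, \ref{yk5lem2} and \ref{yk5lem13} use the non-singularity of Vandermonde sub-matrices to show that every consecutive $m\times m$ (or $M\times(m-1)$) block of $\hat A$ contains at least one (respectively two) nonzero entries, and the theorem follows by counting such blocks. Your route is considerably shorter and more robust; the paper's yields the stronger structural information that the nonzeros of $\hat f$ cannot all avoid a window of the appropriate size. One small caveat: your parenthetical that the inequality is ``trivial'' for $f\equiv 0$ is backwards --- for the zero signal both counts vanish and the inequality fails, so $f\not\equiv 0$ is a genuine (implicit) standing hypothesis; the paper makes the same tacit assumption, so this does not affect the validity of your proof of the result as intended.
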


In particular, when $M=N$, we have
\begin{cor}
\begin{equation}
N_{(t, s)}\cdot N_{(u, v)}\geq N^{2}.
\end{equation}
\end{cor}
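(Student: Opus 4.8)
The plan is to obtain this bound as the diagonal specialization $M=N$ of the Main Theorem (Theorem \ref{yk5th1}). That theorem asserts $N_{(t,s)}\cdot N_{(u,v)}\geq MN$ for every pair of positive integers $M,N$, where $N_{(t,s)}$ and $N_{(u,v)}$ count the nonzero entries of a DQFT pair $\{f(t,s)\}$ and $\{\hat f(u,v)\}$. Since the hypotheses of Theorem \ref{yk5th1} place no restriction on the relative sizes of $M$ and $N$, I would simply impose the constraint $M=N$ and read off the conclusion.

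Concretely, first I would confirm that the setup of Theorem \ref{yk5th1} remains valid when $M=N$: the signal $\{f(t,s)\}$ becomes an $N\times N$ array indexed by $t,s\in\{0,1,\ldots,N-1\}$, its DQFT $\{\hat f(u,v)\}$ is likewise an $N\times N$ array, and the two Vandermonde factors $V_{\i}$ and $V_{\j}$ appearing in the relation $\hat A=V_{\i}AV_{\j}$ become nonsingular matrices of the common size $N\times N$. Thus every hypothesis of the general theorem is met in the square case without modification. Substituting $M=N$ into the right-hand side of (\ref{yk5eq3.5}) replaces the product $MN$ by $N\cdot N=N^2$, which yields $N_{(t,s)}\cdot N_{(u,v)}\geq N^2$, exactly the asserted inequality.

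Because the statement is a direct substitution into an already-established inequality, there is no genuine analytic obstacle to overcome. The only point worth checking is the bookkeeping one noted above, namely that the square regime $M=N$ introduces no additional constraints beyond those of the general theorem; once this is verified, the corollary follows at once.
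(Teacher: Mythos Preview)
Your proposal is correct and matches the paper's approach exactly: the corollary is stated immediately after the line ``In particular, when $M=N$, we have'' and is simply the specialization $M=N$ of Theorem~\ref{yk5th1}, with no separate proof given. There is nothing to add.
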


\begin{cor}[1D discrete uncertainty principle]
In Theorem \ref{yk5th1}, when $M=1$ or $N=1$, one has the classical discrete uncertainty principle (\ref{yk5eq111}) in \cite{DS1989}.
\end{cor}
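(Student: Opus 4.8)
The plan is to obtain this statement as an immediate specialization of the Main Theorem, Theorem \ref{yk5th1}: I will carry out the index degeneration produced by setting one of the dimensions equal to $1$, check that the two-sided DQFT (\ref{yk5eq1}) collapses to a one-dimensional discrete Fourier transform, and then read off the inequality. The two cases $N=1$ and $M=1$ will be handled separately, because they reduce to the one-dimensional transform through the two different exponential factors in (\ref{yk5eq1}); the case $N=1$ is the more transparent one and I would present it first.

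First, take $N=1$. Then the indices $s$ and $v$ each take only the value $0$, so the doubly indexed arrays $\{f(t,s)\}$ and $\{\hat f(u,v)\}$ collapse to the length-$M$ one-dimensional sequences $\{f(t,0)\}_{t=0}^{M-1}$ and $\{\hat f(u,0)\}_{u=0}^{M-1}$. The right factor $e^{-2\pi\j vs/N}$ equals $1$, so (\ref{yk5eq1}) becomes
$$\hat f(u,0)=\frac{1}{\sqrt M}\sum_{t=0}^{M-1}e^{-2\pi\i\frac{ut}{M}}f(t,0),$$
which, for a signal valued in the complex line $\R\oplus\R\i$, is exactly the classical length-$M$ discrete Fourier transform. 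Writing $N_t:=N_{(t,s)}$ for the number of nonzero $f(t,0)$ and $N_\omega:=N_{(u,v)}$ for the number of nonzero $\hat f(u,0)$, Theorem \ref{yk5th1} gives $N_tN_\omega\geq MN=M$, which is precisely (\ref{yk5eq111}) with the length $N$ there replaced by $M$.

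For the case $M=1$ the roles are symmetric: now $t,u\in\{0\}$, the left factor $e^{-2\pi\i ut/M}$ equals $1$, and (\ref{yk5eq1}) reduces to
$$\hat f(0,v)=\frac{1}{\sqrt N}\sum_{s=0}^{N-1}f(0,s)\,e^{-2\pi\j\frac{vs}{N}}.$$
The only point needing a word is that the surviving exponential is now built from $\j$ rather than $\i$; but since $\j^2=-1$, the assignment $\i\mapsto\j$ is an isomorphism of the complex subalgebra $\R\oplus\R\i$ onto $\R\oplus\R\j$, under which the displayed sum is identified with the classical length-$N$ DFT. Theorem \ref{yk5th1} then yields $N_{(t,s)}N_{(u,v)}\geq N$, again giving (\ref{yk5eq111}).

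Since the corollary is a pure specialization, there is no genuine difficulty to overcome; the whole content is the bookkeeping of the degenerate indices together with the identification just described. The only step deserving attention is this matching of the surviving one-sided quaternion exponential with the classical complex exponential in the $M=1$ case, which the subalgebra isomorphism settles. I would close by remarking that for a general quaternion-valued one-dimensional signal the same bound persists, so (\ref{yk5eq111}) is recovered exactly on complex signals and extended on quaternionic ones.
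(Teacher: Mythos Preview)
Your argument is correct: the corollary is a pure specialization of Theorem \ref{yk5th1}, and your bookkeeping of the degenerate indices together with the identification of the $\j$-exponential with the classical complex exponential via the subalgebra isomorphism $\R\oplus\R\i\cong\R\oplus\R\j$ is exactly what is needed. The paper itself offers no proof for this corollary, treating it as an immediate consequence of the Main Theorem, so your write-up simply makes explicit the reduction that the paper leaves to the reader.
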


\begin{rem}
Theorem \ref{yk5th1} gives a lower bound on the value of the time-bandwidth product. Corollary \ref{yk5th2} gives a lower bound of the sum of nonzero elements in both time and frequency spaces. Furthermore, it is easy to construct examples on the equality of (\ref{yk5eq3.5}) in Theorem \ref{yk5th1}. In this sense, the discrete-time principle is sharp.

\end{rem}

\begin{ex}[A trivial case] \label{yk5exa1}
Let $t=0, 1, \cdots, M-1, s=0, 1, \cdots, N-1$ and
\begin{eqnarray*}
f(t, s)=\left\{
\begin{array}{lll}
1, & \mbox{ for } t=0, s=0,\\
0,& \mbox{otherwise}.
\end{array}
\right.
\end{eqnarray*}
As a consequence of (\ref{yk5eq1}), we have
$$\hat{f}(u, v)=\frac{1}{\sqrt{MN}}f(0, 0)=\frac{1}{\sqrt{MN}},$$  when $u=0, 1, \cdots, M-1$ and $v=0, 1, \cdots, N-1.$
Therefore, $N_{(t,s)}=1$, $N_{(u, v)}=MN$,  or equivalently $N_{(t,s)}\cdot N_{(u, v)}=MN$ as desired.
\end{ex}

The next result is a non-trivial example.
\begin{ex}
Suppose that $M=N$ admits the factorization $N=k\cdot l$ and
\begin{eqnarray}\label{yk5eq6}
A =\left( f(t, s) \right) = \left(
\begin{array}{cccc}
E_k & E_k & \cdots & E_k\\
E_k & E_k & \cdots & E_k\\
\vdots  & \vdots  & \vdots & \vdots   \\
E_k & E_k& \cdots & E_k
\end{array}
\right),
\end{eqnarray}
where
\begin{eqnarray*}
E_k :=\left(
\begin{array}{cccc}
1 & 0 & \cdots & 0\\
0 & 0 & \cdots & 0\\
\vdots  & \vdots  & \vdots & \vdots   \\
0 & 0& \cdots & 0
\end{array}
\right)
\end{eqnarray*} is a $k \times k$ matrix with entries are determined by $e_{st} = 1$ if $s=t=1$, otherwise $0$.
We can prove that
\begin{eqnarray*}
\hat{A}=\left(\hat{f}(u, v)\right)=\frac{l}{k}\left(
\begin{array}{cccc}
E_l & E_l & \cdots & E_l\\
E_l & E_l & \cdots & E_l\\
\vdots  & \vdots  & \vdots & \vdots   \\
E_l & E_l& \cdots & E_l
\end{array}
\right)
\end{eqnarray*}
is a $N \times N$ element block matrix with $k \times k$ sub-Matrices $E_l$.
Here,
$
E_l=\left(
\begin{array}{cccc}
1 & 0 & \cdots & 0\\
0 & 0 & \cdots & 0\\
\vdots  & \vdots  & \vdots & \vdots   \\
0 & 0& \cdots & 0
\end{array}
\right)
$
with entries are determined by $e_{u,v} :=1$ if $u=v=1$, otherwise is 0. Equivalently, we have $N_{(t, s)}=l^2$ and $N_{(u, v)}=k^2$.
Therefore, $$N_{(t, s)}\cdot N_{(u, v)}=l^2\cdot k^2=N^2. $$
\end{ex}
\begin{proof} Let $p, q=0, 1, \cdots, l-1$. As a consequence of (\ref{yk5eq6}), we have
\begin{eqnarray}\label{yk5eq7}
f(t, s)=\left\{
\begin{array}{lll}
1, & \mbox{ for } t=kp, s=kq,\\
0,& \mbox{otherwise}.
\end{array}
\right.
\end{eqnarray}
That is
\begin{eqnarray}\label{yk5eq8}
A&=&\left(
\begin{array}{cccc}
f(0, 0) & f(0, k) & \cdots & f(0, (l-1)k)\\
f(k, 0) & f(k, k) & \cdots & f(k, (l-1)k)\\
\vdots  & \vdots  & \vdots & \vdots   \\
f((l-1)k, 0) & f((l-1)k, k)& \cdots & f((l-1)k, (l-1)k)
\end{array}
\right)\nonumber\\
&=&
\left(
\begin{array}{cccc}
1 & 1 & \cdots & 1\\
1 & 1 & \cdots & 1\\
\vdots  & \vdots  & \vdots & \vdots   \\
1 & 1& \cdots & 1
\end{array}
\right).
\end{eqnarray}
Then we have to prove that
\begin{eqnarray*}
\hat{f}(u, v)=\left\{
\begin{array}{lll}
\frac{l}{k}, & \mbox{ for } u=al, v=bl,\\
0,& \mbox{otherwise},
\end{array}
\right.
\end{eqnarray*}
where $a, b=0, 1, \cdots, k-1.$

In fact, for $a, b=0, 1, \cdots, k-1$, $N=l \cdot k$ and as a consequence of (\ref{yk5eq1}), (\ref{yk5eq7}) and (\ref{yk5eq8}), we have

\begin{eqnarray}\label{yk5eq9}
\hat{f}(al, bl)&=&\frac{1}{N}\sum_{t=0}^{N-1}\sum_{s=0}^{N-1}e^{-2\pi\i \frac{tal}{N}}f(t, s)e^{-2\pi\j\frac{sbl}{N}}\nonumber\\
&=&\frac{1}{kl}\sum_{t=0}^{kl-1}\sum_{s=0}^{kl-1}e^{-2\pi\i \frac{ta}{k}}f(t, s)e^{-2\pi\j\frac{sb}{k}}\nonumber\\
&=&\frac{1}{kl}\sum_{p=0}^{l-1}\sum_{s=0}^{l-1}e^{-2\pi\i \frac{kpa}{k}}e^{-2\pi\j\frac{kqb}{k}}.
\end{eqnarray}
Therefore, we obtain
\begin{eqnarray}
\hat{f}(al, bl)&=&\frac{1}{kl}\sum_{p=0}^{l-1}\sum_{s=0}^{l-1}e^{-2\pi\i pa}e^{-2\pi\j qb}\nonumber\\
&=&\frac{1}{kl}\sum_{p=0}^{l-1}\sum_{s=0}^{l-1}1=\frac{l}{k}\nonumber.
\end{eqnarray}
When $u\neq al, v\neq bl$, for $a, b=0, 1, \cdots, k-1$, we have
\begin{eqnarray*}
\hat{f}(u, v)=\frac{1}{kl}\sum_{t=0}^{kl-1}\sum_{s=0}^{kl-1}e^{-2\pi\i\frac{ut}{kl}}f(t, s)e^{-2\pi\j\frac{vs}{kl}}.
\end{eqnarray*}
As a consequence of (\ref{yk5eq7}) and (\ref{yk5eq8}), we obtain

\begin{eqnarray*}
&&\hat{f}(u, v)\\
&=&f(0, 0)+f(0, k)e^{-2\pi\j\frac{kv}{kl}}+\cdots+f(0, (l-1)k)e^{-2\pi\j\frac{kv(l-1)}{kl}}\\
&&+e^{-2\pi\i\frac{ku}{kl}}[f(k, 0)+f(k, k)e^{-2\pi\j\frac{kv}{kl}}+\cdots+f(k, (l-1)k)e^{-2\pi\j\frac{kv(l-1)}{kl}}]\\
&&+\cdots\\
&&+e^{-2\pi\i\frac{(l-1)ku}{kl}}[f(k, 0)+f((l-1)k, k)e^{-2\pi\j\frac{kv}{kl}}\\
&&+\cdots+f((l-1)k, (l-1)k)e^{-2\pi\j\frac{kv(l-1)}{kl}}]\\
&=&1+e^{-2\pi\j\frac{v}{l}}+\cdots+e^{-2\pi\j\frac{v(l-1)}{l}}\\
&&+e^{-2\pi\i\frac{u}{l}}[1+e^{-2\pi\j\frac{v}{l}}+\cdots+e^{-2\pi\j\frac{v(l-1)}{l}}]\\
&&+\cdots\\
&&+e^{-2\pi\i\frac{(l-1)u}{l}}[1+e^{-2\pi\j\frac{v}{l}}+\cdots+e^{-2\pi\j\frac{v(l-1)}{lh}}]=0.
\end{eqnarray*}
This yields the desired conclusion.
\end{proof}
To show the discrete uncertainty principle for quaternion-valued signal, the consecutive $m \times n$ sub-matric stated in the following definition are sufficient.
\begin{defn}[Consecutive Sub-Matric of a Given Matrix]
Given a $M \times N$ matrix \begin{eqnarray}
A=\left(
\begin{array}{cccc}
a_{1,1} & a_{1,2} & \cdots & a_{1,N}\\
\vdots & &\ddots &\vdots \\
a_{M,1}  & a_{M,2}  & \ldots & a_{M,N} \nonumber
\end{array}
\right),
\end{eqnarray}
define the $(2M-1) \times (2N-1)$ matrix ${\Lambda_A}$ as follows
\begin{eqnarray}
{\Lambda_A}:=\left(
\begin{array}{ccccccc}
a_{1,1} & a_{1,2} & \cdots & a_{1,N} & a_{1,1} &\cdot & a_{1,N-1}\\
\vdots &\vdots &\ddots &\vdots & \vdots &\ddots &\vdots\\
a_{M,1}  & a_{M,2}  & \ldots & a_{M,N} &a_{M,1} &\ldots & a_{M, N-1}\\
a_{1,1} & a_{1,2} & \cdots & a_{1,N} &   &  &  \\
\vdots&\vdots &\ddots & \vdots & &0 & \\
a_{M-1,1}  & a_{M,2}  & \ldots & a_{M-1,N} &  & & \nonumber
\end{array}
\right),\end{eqnarray} then the consecutive $m \times n$ sub-matric of $A$ are defined by the $m \times n$ sub-matric of $\Lambda_A$ with rows in $m$ consecutive terms or columns in $n$ consecutive terms, where integers $m \leq M$ and $n \leq N$.

\end{defn}

The following lemmas will be essential in proving these discrete uncertainty principle. 
Let $[r]$ be the smallest integer greater than or equal to $r$ and denote $m:= \sqrt{N_{(t, s)}}$.

\begin{lem}\label{yk5lem1}
Let $1<N_{(t, s)}\leq MN$. Assume that the sequence $\{f(t, s)\}$ has $N_{(t, s)}$ nonzero elements ($ t=0, 1, \cdots, M-1, s=0, 1, \cdots, N-1$). 
If $[m] = m$ and $m\leq \min\{M, N\}$, then the sequence $\{ \hat{f}(u, v)\}$ ($u=0, 1, \cdots, M-1, v=0, 1, \cdots, N-1$) forms any consecutive $m-$matrix, i.e.,
\begin{eqnarray}\label{yk5eq4}
&&\hat{A}_{u, v}=\left(\hat{f}(u, v)\right)\\
&=&\left(
\begin{array}{cccc}
\hat{f}(u, v) & \hat{f}(u, v+1) & \cdots & \hat{f}(u, v+m-1)\\
\hat{f}(u+1, v) & \hat{f}(u+1, v+1) & \cdots & \hat{f}(u+1, v+m-1)\\
\vdots  & \vdots  & \vdots & \vdots   \\
\hat{f}(u+m-1, v) & \hat{f}(u+m-1, v+1) & \cdots & \hat{f}(u+m-1, v+m-1)\nonumber
\end{array}
\right),
\end{eqnarray}
which has at least one nonzero element.
\end{lem}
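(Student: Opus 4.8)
Since $[m]=m$, the number $m=\sqrt{N_{(t,s)}}$ is a positive integer and $N_{(t,s)}=m^{2}$. The plan is to argue by contradiction: assume that for some corner $(u_0,v_0)$ the consecutive block in $(\ref{yk5eq4})$, i.e.\ $\big(\hat f(u_0+a,v_0+b)\big)_{a,b=0}^{m-1}$ with indices read cyclically as in $\Lambda_{\hat A}$, vanishes identically, and deduce $f\equiv 0$ --- impossible since $N_{(t,s)}\geq 2$.

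The first step is to move the corner to the origin by a modulation. Put $\tilde f(t,s):=e^{-2\pi\i u_0 t/M}\,f(t,s)\,e^{-2\pi\j v_0 s/N}$. Both exponential factors are unimodular, so $|\tilde f(t,s)|=|f(t,s)|$ and $\tilde f$ has exactly $m^{2}$ nonzero entries, at the same positions as $f$. Writing $e^{-2\pi\i(u_0+a)t/M}=e^{-2\pi\i at/M}e^{-2\pi\i u_0 t/M}$ and $e^{-2\pi\j(v_0+b)s/N}=e^{-2\pi\j v_0 s/N}e^{-2\pi\j bs/N}$ (valid since each pair of factors lies in the commutative subfield generated by $\i$, resp.\ by $\j$) and substituting in $(\ref{yk5eq1})$ yields
\[
0=\hat f(u_0+a,v_0+b)=\frac{1}{\sqrt{MN}}\sum_{t=0}^{M-1}\sum_{s=0}^{N-1}e^{-2\pi\i at/M}\,\tilde f(t,s)\,e^{-2\pi\j bs/N},\qquad 0\le a,b\le m-1 .
\]
So it suffices to show that no sequence with exactly $m^{2}$ nonzero entries can satisfy these $m^{2}$ equations.

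The second step is a Vandermonde elimination performed first in $t$ and then in $s$, keeping track of the non-commutativity; it is the quaternionic analogue of the argument behind the classical bound $(\ref{yk5eq111})$ of \cite{DS1989}. Let $T\subseteq\{0,\ldots,M-1\}$ and $S\subseteq\{0,\ldots,N-1\}$ be the sets of rows and columns carrying a nonzero entry of $\tilde f$, and regroup the double sum as $\sum_{t\in T}e^{-2\pi\i at/M}\,\Phi_b(t)=0$ with $\Phi_b(t):=\sum_{s\in S}\tilde f(t,s)\,e^{-2\pi\j bs/N}\in\H$. Consider first the square case $|T|=|S|=m$; since $\tilde f$ is supported in $T\times S$ and hence $|T|\,|S|\geq m^{2}$, this is exactly the case that the support of $\tilde f$ is a full $m\times m$ grid $T\times S$. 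Then $\big(e^{-2\pi\i at/M}\big)_{0\le a\le m-1,\,t\in T}$ is a nonsingular Vandermonde matrix (its nodes $e^{-2\pi\i t/M}$, $t\in T$, are distinct), and left-multiplying the quaternionic vector $\big(\Phi_b(t)\big)_{t\in T}$ by its inverse --- legitimate by associativity of quaternion multiplication --- forces $\Phi_b(t)=0$ for all $t\in T$ and $0\le b\le m-1$. Substituting back, $\sum_{s\in S}\tilde f(t,s)\,e^{-2\pi\j bs/N}=0$ for $0\le b\le m-1$, and $\big(e^{-2\pi\j bs/N}\big)_{0\le b\le m-1,\,s\in S}$ is again a nonsingular Vandermonde matrix, now acting on the right, so $\tilde f(t,s)=0$ for every $(t,s)\in T\times S$. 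Hence $\tilde f\equiv 0$, so $f\equiv 0$, a contradiction.

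The step I expect to be the real obstacle is the passage from this product-support case to a general support of size $m^{2}$: as soon as the nonzero entries do not occupy exactly $m$ rows and $m$ columns one has $|T|>m$ or $|S|>m$, one of the two Vandermonde systems becomes under-determined, and the two-step elimination no longer closes. Handling this needs an extra ingredient --- either using the cyclic/translation freedom together with the hypothesis $m\le\min\{M,N\}$ to reduce to the product-support case, or showing directly that the $m^{2}\times m^{2}$ moment array $\big(e^{-2\pi\i at/M}\,e^{-2\pi\j bs/N}\big)_{0\le a,b\le m-1,\ (t,s)\in\operatorname{supp}\tilde f}$ is nonsingular for every $m^{2}$-element support set. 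Establishing that nonsingularity is the crux of the argument.
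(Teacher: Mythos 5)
Your Vandermonde argument for the product--support case is correct, and it is in substance the paper's entire proof: the paper simply declares ``without loss of generality, let $f(t,s)\neq 0$ for $t,s=0,1,\cdots,m-1$'' and then inverts the two $m\times m$ factors $V_{\i}^{u}$ and $V_{\j}^{v}$, which is exactly your two-step elimination with the support placed at the corner. (Your version is in fact the more careful one: the matrices $V_{\i}^{u}$, $V_{\j}^{v}$ as printed in the paper have nodes $e^{-2\pi\i tu/M}$, which coincide when $\gcd(u,M)>1$, whereas your factorization correctly splits off the unimodular modulation $e^{-2\pi\i u_{0}t/M}$ and leaves a Vandermonde matrix with the distinct nodes $e^{-2\pi\i t/M}$, $t\in T$.) So up to the product--support case you have reproduced the paper's route.

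The obstacle you flag at the end is, however, a genuine gap, and the paper does not close it either: a support of cardinality $m^{2}$ need not be contained in $m$ rows and $m$ columns, so the paper's ``without loss of generality'' is not a legitimate reduction, and once $|T|>m$ or $|S|>m$ the system is no longer a tensor product of two invertible Vandermonde systems. Worse, the nonsingularity of arbitrary moment minors that you identify as the crux actually \emph{fails} in the commutative analogue (both exponential kernels built from the same unit $\i$): take $M=N=4$, $m=2$, $\omega=e^{-2\pi\i/4}$ and $f(t,s)=\delta_{ts}\,\omega^{-3t}$, a diagonal support with $N_{(t,s)}=4$. Then the transform is $\tfrac14\sum_{t}\omega^{(u+v-3)t}$, which is supported on $\{(u,v):u+v\equiv 3\ (\mathrm{mod}\ 4)\}$ and hence vanishes identically on the consecutive block $\{0,1\}\times\{0,1\}$. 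This does not by itself refute the quaternionic lemma, since the two-sided transform (\ref{yk5eq1}) sandwiches $f$ between powers of the two distinct units $\i$ and $\j$; but it shows that no soft Vandermonde or minor--nonsingularity argument can settle the general case, and that any complete proof would have to exploit the non-commutativity in an essential way. As it stands, your proposal --- like the paper's own proof --- establishes the lemma only for supports forming an $m\times m$ product grid.
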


\begin{proof}Without loss of generality, let $f(s,t)\neq 0$ ($s, t=0, 1, \cdots m-1$). Denote $m \times m$ matrix by
\begin{eqnarray*}
A_{0, 0}& :=&\left(f(t, s)\right)\\
&=&\left(
\begin{array}{cccc}
f(0, 0) & f(0, 1) & \cdots & f(0, m-1)\\
f(1, 0) & f(1, 1) & \cdots & f(1, m-1)\\
\vdots  & \vdots  & \vdots & \vdots   \\
f(m-1, 0) & f(m-1, 1) & \cdots & f(m-1, m-1).
\end{array}
\right),
\end{eqnarray*}
then $A_{0, 0}\neq {\mathbf 0}$. Here ${\mathbf 0}$ is a $m \times m$ zero matrix.

Then one only needs to prove that any consecutive $m-$matix $\hat{A}_{u, v} \neq {\mathbf 0}$. We prove it by contradiction.
Suppose that there exists $\hat{A}_{u, v}= {\mathbf 0}$, applying (\ref{yk5eq3}), we have
$$A_{0, 0}=({V_{\i}^u})^{-1}\hat{A}_{u, v}({V_{\j}^v})^{-1}={\mathbf 0}.$$
Here
\begin{eqnarray*}
V_{\i}^u :=\frac{1}{\sqrt{M}}\left(
\begin{array}{cccc}
1 & 1 & \cdots & 1\\
1 & e^{-2\pi\i\frac{u}{M}} & \cdots & e^{-2\pi\i\frac{ (m-1)u}{M}}\\
\vdots  & \vdots  & \vdots & \vdots   \\
1 & e^{-2\pi\i\frac{ (m-1)u}{M}}& \cdots & e^{-2\pi\i\frac{(m-1)^2u}{M}}
\end{array}
\right)
\end{eqnarray*}
and
\begin{eqnarray*}
V_{\j}^v :=\frac{1}{\sqrt{N}}\left(
\begin{array}{cccc}
1 & 1 & \cdots & 1\\
1 & e^{-2\pi\j\frac{v}{N}} & \cdots & e^{-2\pi\j\frac{ (m-1)v}{N}}\\
\vdots  & \vdots  & \vdots & \vdots   \\
1 & e^{-2\pi\j\frac{ (m-1)v}{N}}& \cdots & e^{-2\pi\j\frac{(m-1)^2v}{N}}
\end{array}
\right),
\end{eqnarray*} respectively.
It contradicts with $A_{0, 0}\neq {\mathbf 0}$. Therefore, $\hat{A}_{u, v}\neq {\mathbf 0}$. This also means that the sequence $\{ \hat{f}(u, v) \}$ has at least one nonzero element. It completes the proof.\end{proof}
The following example illustrates the consecutive $m-$matix $\hat{A}_{u, v}$.
\begin{ex} For $M=2$, $N=3$, let
\begin{eqnarray*}
A=\left(
\begin{array}{ccc}
{f}(0, 0) & {f}(0, 1) & {f}(0, 2)\\
{f}(1, 0) & {f}(1, 1) & {f}(1, 2)
\end{array}
\right).
\end{eqnarray*}
We have
\begin{eqnarray*}
\hat{A}=\left(
\begin{array}{ccc}
\hat{f}(0, 0) & \hat{f}(0, 1) & \hat{f}(0, 2)\\
\hat{f}(1, 0) & \hat{f}(1, 1) & \hat{f}(1, 2)
\end{array}
\right).
\end{eqnarray*}
As a consequence of periodic, the consecutive $2-$matices are:
$$\hat{A}_{0, 0}=\left(
\begin{array}{cc}
\hat{f}(0, 0) & \hat{f}(0, 1) \\
\hat{f}(1, 0) & \hat{f}(1, 1)
\end{array}
\right),$$

$$\hat{A}_{0, 1}=\left(
\begin{array}{cc}
\hat{f}(0, 1) & \hat{f}(0, 2) \\
\hat{f}(1, 1) & \hat{f}(1, 2)
\end{array}
\right)$$
and
$$\hat{A}_{0, 2}=\left(
\begin{array}{cc}
\hat{f}(0, 2) & \hat{f}(0, 0) \\
\hat{f}(1, 2) & \hat{f}(0, 1)
\end{array}
\right).$$

\end{ex}

The following corollary is an immediate consequence of Lemma \ref{yk5lem1}.
\begin{cor}\label{yk5rem1}
If the sequence $\{f(t, s)\}$ $(t=0, 1, \cdots, M-1, s=0, 1, \cdots, N-1)$ has $MN$ nonzero elements, then the sequence $\{ \hat{f}(u, v)\}$ $(u=0, 1, \cdots, M-1, v=0, 1, \cdots, N-1)$ has at least one nonzero element. Thus $N_{t, s}\cdot N_{u, v}\geq MN.$
\end{cor}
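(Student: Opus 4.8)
The plan is to reduce the statement to the invertibility of the discrete quaternion Fourier transform, which is exactly the mechanism already exploited in the proof of Lemma \ref{yk5lem1}. The hypothesis that $\{f(t,s)\}$ has $MN$ nonzero elements says precisely that \emph{every} entry of the matrix $A=\left(f(t,s)\right)$ is nonzero; in particular $A\neq{\mathbf 0}$. I would then argue by contradiction: assume $\hat{A}=\left(\hat{f}(u,v)\right)={\mathbf 0}$. Applying the inversion formula (\ref{yk5eq3}) gives $A=V_{-\i}\hat{A}V_{-\j}={\mathbf 0}$, contradicting $A\neq{\mathbf 0}$, since $V_{\i}$ and $V_{\j}$ are non-singular. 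Hence $\hat{A}$ has at least one nonzero entry, so $N_{(u,v)}\geq 1$, and therefore $N_{(t,s)}\cdot N_{(u,v)}=MN\cdot N_{(u,v)}\geq MN$.

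To present this genuinely as a corollary of Lemma \ref{yk5lem1}, one can argue as follows in the square case $M=N$: put $m=\sqrt{N_{(t,s)}}=\sqrt{MN}=N$, so that $[m]=m=N=\min\{M,N\}$ and Lemma \ref{yk5lem1} applies directly; it shows that the consecutive $m$-matrix $\hat{A}_{0,0}$, which here is all of $\hat{A}$, is not the zero matrix, whence $N_{(u,v)}\geq 1$ and $N_{(t,s)}\cdot N_{(u,v)}\geq N^2=MN$. For rectangular $M\neq N$ one has $m=\sqrt{MN}\geq\min\{M,N\}$, so Lemma \ref{yk5lem1} cannot be quoted verbatim, and here I would simply fall back on the direct invertibility argument of the first paragraph, which is the same contradiction step that appears inside the proof of Lemma \ref{yk5lem1}.

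The main, and essentially only, obstacle is this bookkeeping mismatch: the quantity $m=\sqrt{N_{(t,s)}}$ saturates the constraint $m\leq\min\{M,N\}$ exactly when $N_{(t,s)}=MN$, so for $M\neq N$ one must either invoke the non-singularity of the Vandermonde matrices directly or restate Lemma \ref{yk5lem1} with $m$ replaced by $\min\{M,N\}$. No new computation is required; the only point deserving a sentence of care is that the cancellation $A={\mathbf 0}\iff\hat{A}={\mathbf 0}$ is legitimate over the noncommutative algebra $\mathcal{H}$ precisely because $V_{\i}$ and $V_{\j}$ admit two-sided inverses, so that $A=V_{\i}^{-1}\big(V_{\i}AV_{\j}\big)V_{\j}^{-1}$ holds identically.
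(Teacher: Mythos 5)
Your proof is correct and rests on the same mechanism the paper intends: the two-sided invertibility of the Vandermonde matrices $V_{\i}$ and $V_{\j}$ forces $\hat{A}={\mathbf 0}\Rightarrow A={\mathbf 0}$, so a fully supported $f$ cannot have identically vanishing DQFT, and $N_{(t,s)}\cdot N_{(u,v)}=MN\cdot N_{(u,v)}\geq MN$. Your side remark is also well taken: for $M\neq N$ the quantity $m=\sqrt{MN}$ violates the hypothesis $m\leq\min\{M,N\}$ of Lemma \ref{yk5lem1}, so the paper's claim that the corollary is an ``immediate consequence'' of that lemma is loose, and your direct appeal to the inversion formula (\ref{yk5eq3}) is the cleaner way to close that gap.
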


\begin{lem}\label{yk5lem2}
Let $1<N_{(t, s)}<MN$. Assume that the sequence $\{ f(t, s)\}$ $(t=0, 1, \cdots, M-1, s=0, 1, \cdots, N-1)$ has $N_{(t, s)}$ nonzero elements. If $[m] \neq m$ and $m\leq\min\{M, N\}$, then the sequence $\{ \hat{f}(u, v)\}$ $(u=0, 1, \cdots, M-1, v=0, 1, \cdots, N-1)$ forms any consecutive $m-$matix (\ref{yk5eq4}) which has at least two nonzero elements.
\end{lem}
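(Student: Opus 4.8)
The plan is to push the contradiction argument of Lemma~\ref{yk5lem1} one step further. Since $[m]\neq m$ we have $m<[m]$; because $m\le\min\{M,N\}$ and $[m]$ is the least integer $\ge m$, we also obtain $[m]\le\min\{M,N\}$, and $N_{(t,s)}=m^{2}$ then satisfies $([m]-1)^{2}<N_{(t,s)}<[m]^{2}$. As in the proof of Lemma~\ref{yk5lem1}, I would first reduce --- without loss of generality --- to the case in which all $N_{(t,s)}$ nonzero entries of $\{f(t,s)\}$ lie in the $[m]\times[m]$ corner, so that the block $A_{0,0}=\bigl(f(t,s)\bigr)_{t,s=0}^{[m]-1}$ is nonzero; carrying only $N_{(t,s)}<[m]^{2}$ nonzero entries, $A_{0,0}$ must contain at least one zero entry. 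Here the ``consecutive $m$-matrix'' of (\ref{yk5eq4}) is understood as the $[m]\times[m]$ block $\hat A_{u,v}$.

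Now suppose, for contradiction, that some $\hat A_{u,v}$ has at most one nonzero entry. The matrix identity employed in Lemma~\ref{yk5lem1} carries over, now with $[m]\times[m]$ Vandermonde matrices $V_{\i}^{u}$ and $V_{\j}^{v}$, which stay invertible because $[m]\le\min\{M,N\}$:
\begin{equation*}
A_{0,0}=(V_{\i}^{u})^{-1}\,\hat A_{u,v}\,(V_{\j}^{v})^{-1}.
\end{equation*}
If $\hat A_{u,v}=\mathbf{0}$, then $A_{0,0}=\mathbf{0}$, contradicting $N_{(t,s)}>1$. Otherwise $\hat A_{u,v}=c\,\mathbf{e}_{a}\mathbf{e}_{b}^{\top}$ for a single index pair $(a,b)$ and some $c\in\H\setminus\{0\}$, hence
\begin{equation*}
A_{0,0}=c\,\bigl((V_{\i}^{u})^{-1}\mathbf{e}_{a}\bigr)\bigl(\mathbf{e}_{b}^{\top}(V_{\j}^{v})^{-1}\bigr)
\end{equation*}
is a rank-one matrix. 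Writing $p$ and $q$ for the numbers of nonzero entries of the column $(V_{\i}^{u})^{-1}\mathbf{e}_{a}$ and of the row $\mathbf{e}_{b}^{\top}(V_{\j}^{v})^{-1}$ (so $1\le p,q\le[m]$), the matrix $A_{0,0}$ has exactly $pq$ nonzero entries, whence $pq=N_{(t,s)}$.

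It then remains to contradict $pq=N_{(t,s)}$. From $pq=m^{2}$ with $p\le q$ one gets $p\le m\le q$; since $q$ is an integer and $[m]$ is the least integer $\ge m$, this forces $q=[m]$, so $[m]\mid N_{(t,s)}$. When $[m]\nmid N_{(t,s)}$ this is already absurd. In the remaining case $[m]\mid N_{(t,s)}$ the bounds $([m]-1)^{2}<N_{(t,s)}<[m]^{2}$ force $N_{(t,s)}=[m]([m]-1)$ and $p=[m]-1$; to exclude this one must show that the column $(V_{\i}^{u})^{-1}\mathbf{e}_{a}$ --- equivalently the row $\mathbf{e}_{b}^{\top}(V_{\j}^{v})^{-1}$ --- cannot have exactly $[m]-1$ nonzero entries, that is, that the Lagrange interpolation coefficients at the distinct nodes $e^{-2\pi\i a/M}$, $a=0,\dots,[m]-1$ (with $M>[m]-1$), cannot have a single vanishing coefficient. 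This is the delicate point --- a lone vanishing coefficient is not obviously ruled out --- and, together with making precise the ``without loss of generality'' reduction borrowed from Lemma~\ref{yk5lem1}, it is the main obstacle; once it is settled, $pq=[m]^{2}>N_{(t,s)}$ gives the contradiction and completes the proof.
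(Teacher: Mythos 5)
Your reduction to the rank-one identity $A_{0,0}=c\,\bigl((V_{\i}^{u})^{-1}\mathbf{e}_{a}\bigr)\bigl(\mathbf{e}_{b}^{\top}(V_{\j}^{v})^{-1}\bigr)$ follows the same contradiction scheme as the paper, but the proof is not complete: as you yourself flag, the case $p=[m]-1$, $q=[m]$ (equivalently $N_{(t,s)}=[m]([m]-1)$, i.e.\ $N_{(t,s)}=2,6,12,\dots$) is left open, and nothing in your argument rules out a single vanishing entry in a column of $(V_{\i}^{u})^{-1}$. This is a genuine gap, not a routine detail: for a proper $[m]\times[m]$ Vandermonde submatrix of the $M\times M$ DFT matrix, the inverse is given by Lagrange interpolation at the nodes $e^{-2\pi\i t/M}$, and its entries are (normalized) elementary symmetric functions of the omitted nodes, which are not obviously nonzero for arbitrary $M$ and $[m]$. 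So the count $pq$ really could drop below $[m]^{2}$ unless something is proved about these coefficients, and without that your contradiction does not go through for those values of $N_{(t,s)}$.

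The paper sidesteps the issue entirely: inside its proof it writes $(V_{\i}^{u})^{-1}\hat A_{u,v}(V_{\j}^{v})^{-1}=V_{-\i}^{u}\hat A_{u,v}V_{-\j}^{v}$, i.e.\ it takes the inverse of the truncated exponential matrix to be the conjugate exponential matrix, all of whose entries are unit-modulus exponentials and hence nonzero. Under that identity your $p$ and $q$ are automatically both equal to $[m]$, every entry of the rank-one product $A_{0,0}$ is nonzero (a product of nonzero quaternions is nonzero, $\H$ being a division ring), and the contradiction $N_{(t,s)}\geq[m]^{2}>N_{(t,s)}$ is immediate with no case analysis on $pq$. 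Thus the entire weight of the lemma rests on the claim $(V_{\i}^{u})^{-1}=V_{-\i}^{u}$ for the $[m]\times[m]$ truncation --- true for the full $M\times M$ unitary DFT matrix, but not justified for a proper submatrix. Your more careful bookkeeping exposes exactly this point; to finish along your route you would need to prove that no column of the inverse truncated Vandermonde has exactly one zero entry, which is precisely the step neither you nor (honestly) the paper supplies.
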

\begin{proof}To prove it by contradiction, let
\begin{eqnarray*}
A_{0, 0}&=&\left(f(t, s)\right)\\
&=&\left(
\begin{array}{cccc}
f(0, 0) & f(0, 1) & \cdots & f(0, m-1)\\
f(1, 0) & f(1, 1) & \cdots & f(1, m-1)\\
\vdots  & \vdots  & \vdots & \vdots   \\
f(m-1, 0) & f(m-1, 1) & \cdots & f(m-1, m-1)
\end{array}
\right).
\end{eqnarray*}
Without loss of generality, assume that the non-zeros number $N_{(t, s)}$ of the sequence $\{f(t, s)\}$ are all in $A$, and if the sequence $\{\hat{f}(u, v)\}$ in some consecutive $m-$matix (\ref{yk5eq4}) only has one nonzero point, with the aid of (\ref{yk5eq3}), we have
\begin{eqnarray*}
A_{0, 0}&=&(V_{\i}^{u})^{-1}\hat{A}_{u, v}(V_{\j}^{v})^{-1}\\
&=&V_{-\i}^u\hat{A}_{u, v}V_{-\j}^{v}\\
&=&\frac{1}{\sqrt{MN}}\left(
\begin{array}{cccc}
\hat{f}(u, v) & \hat{f}(u, v) & \cdots & \hat{f}(u, v)\\
\hat{f}(u, v) & \hat{f}(u, v) & \cdots & \hat{f}(u, v)\\
\vdots  & \vdots  & \vdots & \vdots   \\
\hat{f}(u, v) & \hat{f}(u, v) & \cdots & \hat{f}(u, v)
\end{array}
\right).
\end{eqnarray*}
That means the sequence $\{f(t, s)\}$ has at lease $m^2=[\sqrt{N_{t, s}}]^2 > N_{(t, s)}$ nonzero points. This contradicts with the condition of the sequence $\{f(t, s)\}$ which has $N_{(t, s)}$ nonzero elements. It completes the proof.\end{proof}
\begin{lem}\label{yk5lem3}
Let $N\in \N^{+}$ be a positive number. Then we  have
\begin{equation}\label{yk5eq5}
[\sqrt{N}]^2\leq 2N.
\end{equation}
\end{lem}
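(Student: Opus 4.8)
The plan is to prove the elementary inequality $[\sqrt{N}]^2 \le 2N$ for every positive integer $N$, where $[r]$ denotes the smallest integer $\ge r$ (i.e. the ceiling). First I would set $a := [\sqrt{N}]$, so that $a$ is the unique positive integer satisfying $a - 1 < \sqrt{N} \le a$, equivalently $(a-1)^2 < N \le a^2$ when $N$ is not a perfect square, and $a = \sqrt{N}$ when it is. The goal $a^2 \le 2N$ is immediate in the perfect-square case, so the substance is the strict case where $(a-1)^2 < N$, i.e. $N \ge (a-1)^2 + 1$.

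The key step is then to reduce the claim to a statement purely about the integer $a$. Since $N \ge (a-1)^2 + 1$, it suffices to show $a^2 \le 2\big((a-1)^2 + 1\big)$. Expanding the right-hand side gives $2(a-1)^2 + 2 = 2a^2 - 4a + 4$, so the desired inequality $a^2 \le 2a^2 - 4a + 4$ rearranges to $0 \le a^2 - 4a + 4 = (a-2)^2$, which holds for every integer $a$. Thus $a^2 \le 2\big((a-1)^2+1\big) \le 2N$, completing the non-perfect-square case.

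I would also dispose of the small cases separately for safety: when $N = 1$ we have $[\sqrt{1}]^2 = 1 \le 2$, and more generally one should check that the bound $a-1 < \sqrt N$ is the correct characterization of the ceiling (it fails to be strict precisely when $N$ is a perfect square, which is the case already handled). The only mild subtlety — and the single place the argument could go wrong — is getting the ceiling inequalities in the right direction: one must use $\sqrt{N} \le a$ (hence $N \le a^2$, giving nothing) together with $a - 1 < \sqrt{N}$ (hence $N > (a-1)^2$, i.e. $N \ge (a-1)^2 + 1$ by integrality), and it is this lower bound on $N$, not the trivial upper bound $N \le a^2$, that drives the estimate. Everything else is the one-line identity $(a-2)^2 \ge 0$, so there is no real obstacle beyond bookkeeping.
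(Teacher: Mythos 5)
Your proof is correct, but it takes a genuinely different (and cleaner) route than the paper. The paper splits into cases: it verifies $N=1,\dots,5$ by hand, and for $N\geq 6$ bounds the ceiling from above by $[\sqrt{N}]<\sqrt{N}+1$, so that $[\sqrt{N}]^2<N+2\sqrt{N}+1<2N$, where the last step needs $2\sqrt{N}+1<N$ and hence forces the restriction $N\geq 6$. You instead set $a=[\sqrt{N}]$ and exploit the \emph{lower} bound $a-1<\sqrt{N}$, which by integrality gives $N\geq (a-1)^2+1$ and reduces everything to $2\bigl((a-1)^2+1\bigr)-a^2=(a-2)^2\geq 0$. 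This is uniform in $N$, needs no small-case check, and even exhibits when equality occurs (namely $a=2$, $N=2$). Two minor remarks: your separate treatment of the perfect-square case is unnecessary, since $a-1<\sqrt{N}\leq a$ holds for all $N\geq 1$ and the general argument already covers it; and when squaring $a-1<\sqrt{N}$ you implicitly use $a-1\geq 0$, which is fine because $N\geq 1$ forces $a\geq 1$ --- worth a half-sentence but not a gap.
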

\begin{proof} It is straightforward to verify the inequality (\ref{yk5eq5}) is true for $N=1, 2, 3, 4$ and $5$.
When $N\geq 6$, one obtains
\begin{eqnarray*}
N^2+1>6N &\Leftrightarrow& N^2-2N+1>4N \\
&\Leftrightarrow& (N-1)^2>4N\\
&\Leftrightarrow& N-1>2\sqrt{N}\\
&\Leftrightarrow& N>2\sqrt{N}+1.
\end{eqnarray*}
Therefore, for $N\geq 6$,
$$[\sqrt{N}]^2<(\sqrt{N}+1)^2=N+2\sqrt{N}+1<2N.$$
Consequently, equality (\ref{yk5eq5}) holds. It completes the proof.\end{proof}

Without loss of generality, we assume that $M\leq N$. A similar argument as in the proof of Lemma \ref{yk5lem1} is also true for the following lemma.

\begin{lem}\label{yk5lem13}
Let $1<N_{(t, s)}\leq MN$. Assume that the sequence $\{f(t, s)\}$ has $N_{(t, s)}$ nonzero elements  ($t=0, 1, \cdots, M-1, s=0, 1, \cdots, N-1$).  
When $m>M=\min\{M, N\}$, then the sequence $\{\hat{f}(u, v)\}$ $(u=0, 1, \cdots, M-1, v=0, 1, \cdots, N-1)$ forms any consecutive $M\times (m-1)-$matix
\begin{eqnarray}\label{yk5eq4}
&&\hat{A}_{u, v}=\left(\hat{f}(u, v)\right)\\
&=&\left(
\begin{array}{cccc}
\hat{f}(u, v) & \hat{f}(u, v+1) & \cdots & \hat{f}(u, v+m-2)\\
\hat{f}(u+1, v) & \hat{f}(u+1, v+1) & \cdots & \hat{f}(u+1, v+m-2)\\
\vdots  & \vdots  & \vdots & \vdots   \\
\hat{f}(u+M-1, v) & \hat{f}(u+M-1, v+1) & \cdots & \hat{f}(u+M-1, v+m-2)\nonumber
\end{array}
\right),
\end{eqnarray}
which has at least one nonzero element.
\end{lem}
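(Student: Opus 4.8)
The plan is to re‑run the proof of Lemma~\ref{yk5lem1} with the one modification forced by the inequality $m=\sqrt{N_{(t,s)}}>M=\min\{M,N\}$: the ``$m\times m$ window'' used there no longer fits inside the array, so it is replaced by a window that is saturated in the $\i$-direction (all $M$ rows) and is only $m-1$ columns wide. First I would record that $m>M$ is compatible with $N_{(t,s)}=m^{2}\le MN$ only when $m\le N$, since $m>M$ together with $m>N$ would force $m^{2}>MN$; hence $m-1\le N-1$ and a consecutive $M\times(m-1)$ sub-matrix of $\hat{A}$ genuinely exists. Moreover the $m-1$ nodes $e^{-2\pi\j vs/N}$, $s=0,1,\dots,m-2$, are then pairwise distinct, so the truncated $(m-1)\times(m-1)$ Vandermonde matrix built from them is nonsingular over the commutative subfield $\R+\R\j$; and in the $\i$-direction there is no truncation at all, because the window uses all $M$ rows, so the matrix that intervenes is the full, nonsingular matrix $V_{\i}$.

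Next, arguing by contradiction, suppose that for some $(u,v)$ the consecutive $M\times(m-1)$ sub-matrix $\hat{A}_{u,v}$ of $\hat{A}$ equals ${\mathbf 0}$. After a cyclic shift in $t$ and $s$ — the operation under which the notion of consecutive sub-matrix is invariant, through the periodic array $\Lambda_A$ — I would normalise so that a block $A_{0,0}$ of $f$ consisting of $M$ rows and $m-1$ consecutive columns carries a nonzero entry, hence $A_{0,0}\neq{\mathbf 0}$, and so that this block is the one matched to $\hat{A}_{u,v}$ under the local inversion of the DQFT. Then, exactly as in Lemma~\ref{yk5lem1}, applying (\ref{yk5eq3}) together with the support restriction yields $A_{0,0}=(V_{\i}^{u})^{-1}\hat{A}_{u,v}(V_{\j}^{v})^{-1}$, where $(V_{\i}^{u})^{-1}$ and $(V_{\j}^{v})^{-1}$ are the inverses of the $M\times M$ transform matrix and of the truncated $\j$-Vandermonde (up to diagonal phase factors). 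Since the $\i$-factors act on the left and the $\j$-factors on the right, and left- and right-multiplications commute, the identity $\hat{A}_{u,v}={\mathbf 0}$ propagates to $A_{0,0}={\mathbf 0}$, contradicting $A_{0,0}\neq{\mathbf 0}$. Hence every consecutive $M\times(m-1)$ sub-matrix of $\hat{A}$, and in particular the array $\{\hat{f}(u,v)\}$ itself, has at least one nonzero element.

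The step I expect to be the genuine obstacle is the normalisation in the second paragraph, that is, making the ``without loss of generality'' precise. In Lemma~\ref{yk5lem1} the $N_{(t,s)}=m^{2}$ nonzero entries of $f$ could literally be placed into an $m\times m$ square because $m\le\min\{M,N\}$; here, with $m>M$, an $M\times(m-1)$ window has only $M(m-1)$ cells, which need not suffice to hold the whole support (the natural window one can always arrange by a cyclic shift has $M$ rows and $[N_{(t,s)}/M]$ consecutive columns), so one cannot simply park the support inside it. The argument therefore has to exploit that the $\i$-direction is completely saturated — no row-constraint is imposed and $V_{\i}$ is automatically invertible — together with the fact that only $m-1\le N-1$ consecutive $\j$-frequencies are needed to invert a column slice of the relevant length; this bookkeeping is the crux, and once it is in place the contradiction is obtained verbatim as in Lemma~\ref{yk5lem1}.
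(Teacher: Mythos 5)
The paper offers no proof of this lemma---it is introduced only with the remark that ``a similar argument as in the proof of Lemma \ref{yk5lem1} is also true''---so the real question is whether your transplant of that argument closes. It does not: the step you yourself flag as ``the genuine obstacle'' is exactly where the proof breaks, and it cannot be repaired. The identity $A_{0,0}=(V_{\i}^{u})^{-1}\hat{A}_{u,v}(V_{\j}^{v})^{-1}$ is valid only when $f$ vanishes \emph{outside} the time-domain window corresponding to $A_{0,0}$; otherwise $\hat{A}_{u,v}$ depends on entries of $f$ that $A_{0,0}$ does not see, and the local inversion is simply not an identity. In Lemma \ref{yk5lem1} this was arranged by placing the whole support inside an $m\times m$ block. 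Here that is impossible in principle: an $M\times(m-1)$ window has $M(m-1)<m(m-1)<m^{2}=N_{(t,s)}$ cells, so it can never contain the support, and a cyclic shift cannot compress a scattered support into consecutive columns (it preserves the cyclic gap structure), so your parenthetical claim that one can always arrange $M$ rows and $[N_{(t,s)}/M]$ consecutive columns is also incorrect. The sentence ``once it is in place the contradiction is obtained verbatim'' therefore carries the entire burden of the proof, and nothing is put in place.

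Worse, no amount of bookkeeping can close this gap, because the statement is false as written. Take $M=2$, $N=16$, and $f(t,s)=1$ for $t=0$, $f(t,s)=0$ for $t=1$. Then $N_{(t,s)}=16$, $m=4>2=M=\min\{M,N\}$, and a direct computation from (\ref{yk5eq1}) gives $\hat{f}(u,v)=\sqrt{N/M}$ for $v=0$ and $\hat{f}(u,v)=0$ for $v\neq 0$. The consecutive $M\times(m-1)=2\times 3$ window consisting of all rows and columns $v=1,2,3$ is identically ${\mathbf 0}$, contradicting the lemma. (This also invalidates the bound $N_{(u,v)}\geq N/([\sqrt{N_{(t,s)}}]-1)$ used in case 2) of the proof of Theorem \ref{yk5th1}: here $N_{(u,v)}=2$ while $N/([\sqrt{N_{(t,s)}}]-1)=16/3$.) The honest route in the regime $m>M$ is the one your own analysis points toward but does not execute: invert the full $\i$-transform first, reduce to the partial transform $g(t,v)=\frac{1}{\sqrt{N}}\sum_{s}f(t,s)e^{-2\pi\j vs/N}$, and apply the one-dimensional Donoho--Stark/Vandermonde argument to each row $f(t,\cdot)$ separately; as the counterexample shows, this necessarily yields a window width governed by the row supports rather than by $m-1$, so the lemma itself must be reformulated, not merely reproved.
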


We can now proceed to the proof of Main Theorem \ref{yk5th1}.

\begin{proof} [Proof of Main Theorem \ref{yk5th1}]

\begin{itemize}
\item When $N_{(t, s)}=1$ and $N_{(u,v)}=M N$, with the aid of Example \ref{yk5exa1} and Corollary \ref{yk5rem1}, the conclusion holds.

\item When $1 < N_{(t, s)}< MN$. Without loss of generality, we assume that $M\leq N$. There are two cases:
\begin{itemize}
\item[1)]
Assume that $[\sqrt{N_{(t, s)}}]\leq M$, when $[\sqrt{N_{(t, s)}}]=\sqrt{N_{(t, s)}}$,
by Lemma \ref{yk5lem1}, we have
\begin{eqnarray*}
N_{(t, s)}\cdot N_{(u, v)}&\geq& \frac{M}{\sqrt{N_{(t, s)}}}\frac{N}{\sqrt{N_{(t, s)}}} 1 N_{(t, s)}\\
&=&\frac{MN}{(\sqrt{N_{(t, s)}})^2}N_{(t, s)}\\
&=&MN.
\end{eqnarray*}
When $[\sqrt{N_{(t, s)}}]\neq\sqrt{N_{(t, s)}}$, then $[\sqrt{N_{(t, s)}}]>\sqrt{N_{(t, s)}}$. By Lemma \ref{yk5lem2} and (\ref{yk5eq5}), we have
\begin{eqnarray*}
N_{(t, s)}\cdot N_{(u, v)}&\geq& \frac{M}{[\sqrt{N_{(t, s)}}]}\frac{N}{[\sqrt{N_{(t, s)}}]} 2 N_{(t, s)}\\
&\geq&MN.
\end{eqnarray*}

\item[2)] Assume that $[\sqrt{N_{(t, s)}}]>M$, then $\sqrt{N_{(t, s)}}\geq M$.
By Lemma \ref{yk5lem13}, we have
\begin{eqnarray*}
N_{(t, s)}\cdot N_{(u, v)}&\geq& \frac{N}{[\sqrt{N_{(t, s)}}]-1} 1 N_{(t, s)}\\
&\geq& \frac{N}{\sqrt{N_{(t, s)}}} N_{(t, s)}\\
&=&N\sqrt{N_{(t, s)}} \geq M N.
\end{eqnarray*}
\end{itemize}
\end{itemize}
This completes the proof.\end{proof}

The following examples illustrate the discrete uncertainty principle.
\begin{ex} For $M=N=2$, let
\begin{eqnarray*}
A=\left(\begin{array}{cc}
{f}(0, 0) & {f}(0, 1)\\
{f}(1, 0) & {f}(1, 1)
\end{array}
\right).
\end{eqnarray*}
We have
\begin{eqnarray*}
\hat{A}&=&\frac{1}{\sqrt{2}}\left(
\begin{array}{cc}
1 & 1\\
1 & e^{-\pi\i}
\end{array}
\right)\left(
\begin{array}{cc}
{f}(0, 0) & {f}(0, 1)\\
{f}(1, 0) & {f}(1, 1)
\end{array}
\right)
\frac{1}{\sqrt{2}}\left(
\begin{array}{cc}
1 & 1\\
1 & e^{-\pi \j}
\end{array}
\right)\\
&=&
\left(
\begin{array}{cc}
\hat{f}(0, 0) & \hat{f}(0, 1)\\
\hat{f}(1, 0) & \hat{f}(1, 1)
\end{array}
\right).
\end{eqnarray*}
Here
\begin{eqnarray*}
\hat{f}(0, 0)&=& \frac{1}{2}\left[f(0, 0)+f(0, 1)+f(1, 0)+f(1, 1)\right]\\
\hat{f}(0, 1)&=& \frac{1}{2}\left[f(0, 0)+f(1, 0)+\left(f(0, 1)+f(1, 1)\right)e^{-\pi \j }\right]\\
\hat{f}(1, 0)&=& \frac{1}{2}\left[f(0, 0)+f(0, 1)+ e^{-\pi \i }\left(f(1, 0)+f(1, 1)\right)\right]\\
\hat{f}(1, 1)&=& \frac{1}{2}\left[f(0, 0)+e^{-\pi\i}f(1, 0)+f(0, 1)e^{-\pi \j }+e^{-\pi \i }f(1, 1)e^{-\pi \j }\right].
\end{eqnarray*}


We conclude that \begin{itemize}
\item
When $\{f(t, s)\}$ has only one nonzero point, clearly, $\{\hat{f}(u, v)\}$ has 4 nonzero points.

\item When $\{f(t, s)\}$ has two or three nonzero points, $\{\hat{f}(u, v)\}$ has at least 2 nonzero points.

\item When $\{f(t, s)\}$ has four nonzero points, then $\{\hat{f}(u, v)\}$ has at lease 1 nonzero point.
\end{itemize}
Therefore we have $N_{(t, s)}\cdot N_{(u, v)}\geq 4.$ It demonstrates the discrete uncertainty principle. Figure \ref{fig:Ex3.4} shows an example for $\{f(t, s)\}$ with $M=N=2$.
\end{ex}

\begin{figure}[!h]
  \centering
   \includegraphics[height=6cm]{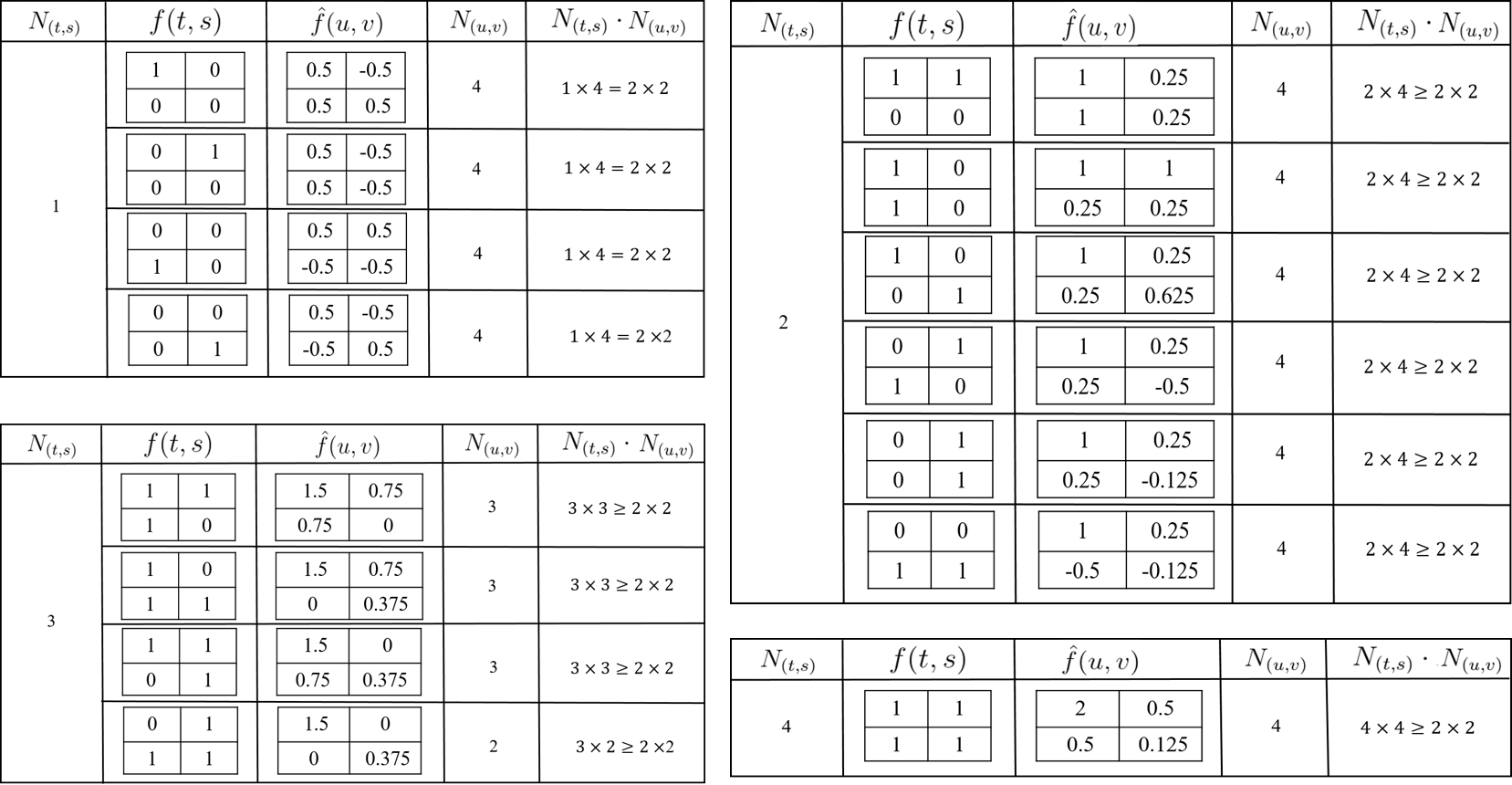}
 \caption{The cases of $N_{(t, s)}$ and  $N_{(u, v)}$ for an example of $\{f(t, s)\}$ with $M=N=2$.
 }
\label{fig:Ex3.4}
\end{figure}

\begin{ex} For $M=2$, $N=3$, let
\begin{eqnarray*}
A=\left(
\begin{array}{ccc}
{f}(0, 0) & {f}(0, 1) & {f}(0, 2)\\
{f}(1, 0) & {f}(1, 1) & {f}(1, 2)
\end{array}
\right).
\end{eqnarray*}

We have
\begin{eqnarray*}
&&\hat{A}\\
&=&\frac{1}{\sqrt{2}}\left(
\begin{array}{cc}
1 & 1\\
1 & e^{-\pi\i}
\end{array}
\right)
\left(
\begin{array}{ccc}
{f}(0, 0) & {f}(0, 1) & {f}(0, 2)\\
{f}(1, 0) & {f}(1, 1) & {f}(1, 2)
\end{array}
\right)
\frac{1}{\sqrt{3}}\left(
\begin{array}{ccc}
1 & 1 & 1\\
1 & e^{-\frac{2\pi \j}{3}}& e^{-\frac{4\pi \j}{3}}\\
1 & e^{-\frac{4\pi \j}{3}}& e^{-\frac{8\pi \j}{3}}
\end{array}
\right)\\
&=&\frac{1}{\sqrt{6}}\left(
\begin{array}{cc}
1 & 1\\
1 & e^{-\pi\i}
\end{array}
\right)
\left(
\begin{array}{ccc}
{f}(0, 0) & {f}(0, 1) & {f}(0, 2)\\
{f}(1, 0) & {f}(1, 1) & {f}(1, 2)
\end{array}
\right)
\left(
\begin{array}{ccc}
1 & 1 & 1\\
1 & e^{-\frac{2\pi \j}{3}}& e^{-\frac{4\pi \j}{3}}\\
1 & e^{-\frac{4\pi \j}{3}}& e^{-\frac{2\pi \j}{3}}
\end{array}
\right)\\
&=&
\left(
\begin{array}{ccc}
\hat{f}(0, 0) & \hat{f}(0, 1) & \hat{f}(0, 2)\\
\hat{f}(1, 0) & \hat{f}(1, 1) & \hat{f}(1, 2)
\end{array}
\right).
\end{eqnarray*}

A straightforward computation gives
\begin{eqnarray*}
\hat{f}(0, 0)&=&\frac{1}{\sqrt{6}}[f(0, 0)+f(1, 0)+f(0, 1)+f(1, 1)+f(0, 2)+f(1, 2)]\\
\hat{f}(0, 1)&=&\frac{1}{\sqrt{6}}[f(0, 0)+f(1, 0)+(f(0, 1)+f(1, 1))e^{-\frac{2\pi \j}{3}}+(f(0, 2)+f(1, 2))e^{-\frac{4\pi \j}{3}}]\\
\hat{f}(0, 2)&=&\frac{1}{\sqrt{6}}[f(0, 0)+f(1, 0)+(f(0, 1)+f(1, 1))e^{-\frac{4\pi \j}{3}}+(f(0, 2)+f(1, 2))e^{-\frac{2\pi \j}{3}}]\\
\hat{f}(1, 0)&=&\frac{1}{\sqrt{6}}[f(0, 0)+f(0, 1)+f(0, 2) + e^{-\pi \i }(f(1, 0)+f(1, 1)+f(1, 2))]\\
\hat{f}(1, 1)&=&\frac{1}{\sqrt{6}}[f(0, 0)+e^{-\pi\i}f(1, 0)+f(0, 1)e^{-\frac{2\pi \j}{3}}+e^{-\pi \i }f(1, 1)e^{-\frac{2\pi \j}{3}}\\
&+& f(0, 2)e^{-\frac{4\pi \j}{3}}+e^{-\pi \i }f(1, 2)e^{-\frac{4\pi \j}{3}}]\\
\hat{f}(1, 2)&=&\frac{1}{\sqrt{6}}[f(0, 0)+e^{-\pi\i}f(1, 0)+f(0, 1)e^{-\frac{4\pi \j}{3}}+e^{-\pi \i }f(1, 1)e^{-\frac{4\pi \j}{3}}\\
&+& f(0, 2)e^{-\frac{2\pi \j}{3}}+e^{-\pi \i }f(1, 2)e^{-\frac{2\pi \j}{3}}].
\end{eqnarray*}

\begin{figure}[!h]
  \centering
   \includegraphics[height=6.5cm]{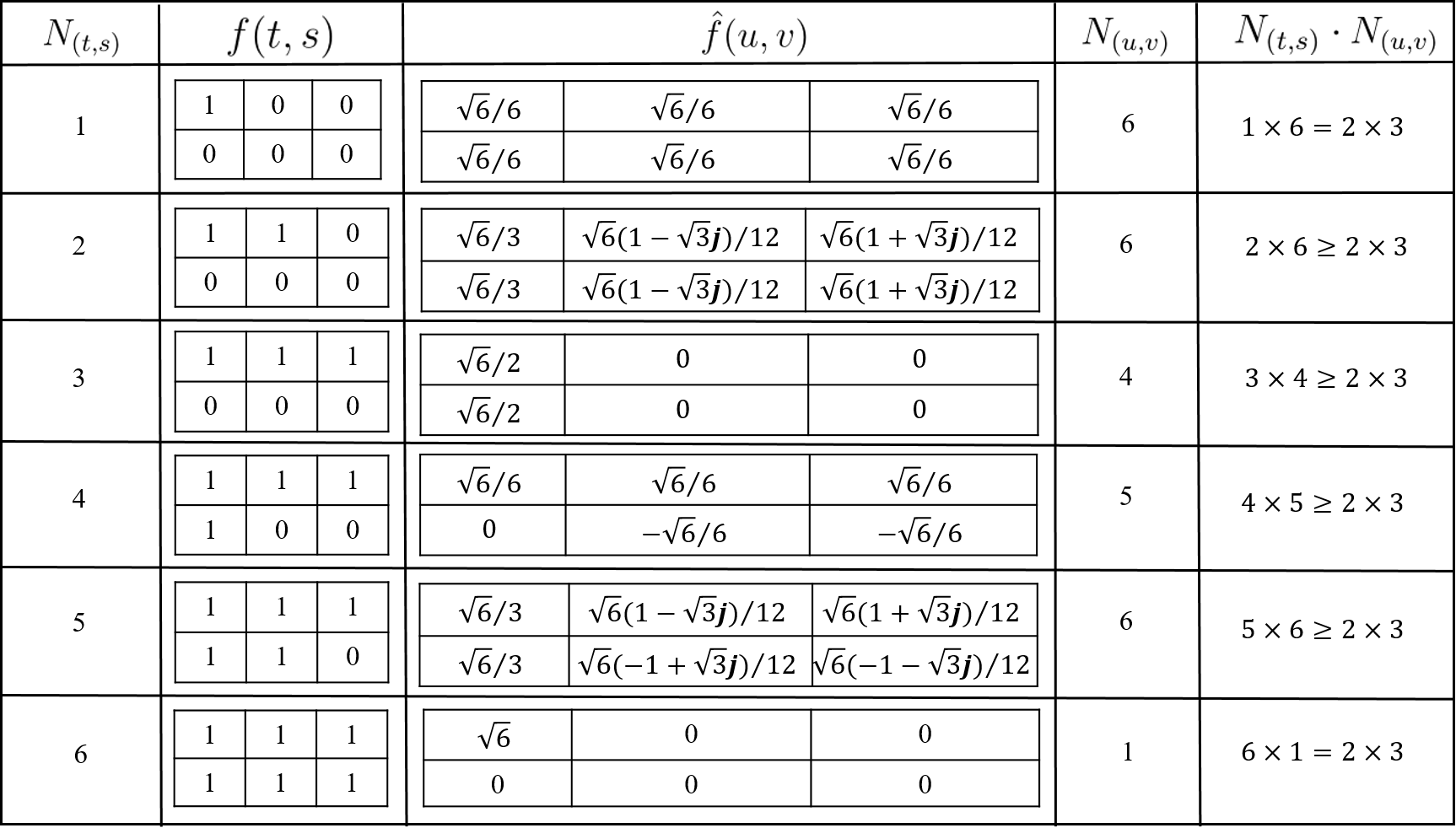}
 \caption{The cases of $N_{(t, s)}$ and  $N_{(u, v)}$ for an example of $f(t, s)$ with $M=2$ and $N=3$.
 }
\label{fig:Ex3.5}
\end{figure}

Clearly,
\begin{itemize}
\item When $\{f(t, s)\}$ has only one nonzero point, then $\{\hat{f}(u, v)\}$ has 6 nonzero points.

\item When $\{f(t, s)\}$ has two or three nonzero points, then $\{\hat{f}(u, v)\}$ has at least 3 nonzero points.

\item When $\{f(t, s)\}$ has four nonzero points, then $\{\hat{f}(u, v)\}$ has at least 2 nonzero points.

\item When $\{f(t, s)\}$ has five nonzero points, then $\{\hat{f}(u, v)\}$ has at least 2 nonzero points.

\item When $\{f(t, s)\}$ has six nonzero points, then $\{\hat{f}(u, v)\}$ has at lease 1 nonzero point.\end{itemize}

Therefore, we have $N_{(t, s)}\cdot N_{(u, v)}\geq 6.$ It also demonstrates the discrete uncertainty principle.
Figure \ref{fig:Ex3.5} shows an example for $\{f(t, s)\}$ with $M=2$, $N=3$.

\end{ex}

\section{Uncertainty Principle for Bandlimited Signal Recovery
}\label{S4}

Donoho and Stark in \cite{DS1989} gave an example where the
discrete-time uncertainty principle (\ref{yk5eq111}) shows something
unexpected is possible. That is the recovery of a $\lq\lq$ sparse " wide-band signal from narrow-band measurements. The discrete-time uncertainty principle suggests how sparsity helps in the recovery of missing frequencies. We derive the results in the quaternionic setting.

Suppose there is  an observed  discrete quaternion-valued signal $r$, which is a combination of an ideal $\Omega$-bandlimited signal $f$ and noise, i.e.
\begin{equation}\label{yk5eq10}
r :=P_{\Omega}f+n
\end{equation}
where $n$ denotes the noise and $P_{\Omega}$ is the operator that limits the measurements to the passband $\Omega$ of the system. Let $P_{\Omega}$ be the ideal bandpass operator
\begin{equation}\label{POmega}
P_{\Omega} f :=\frac{1}{\sqrt{MN}}\sum_{(u, v) \in \Omega} e^{2\pi\i\frac{ ut}{M}}\hat{f}(u, v)e^{2\pi\j\frac{ vs}{N}}.
\end{equation}

If we apply the QDFT, (\ref{yk5eq10}) becomes
\begin{eqnarray}
\hat{r}=\left\{
\begin{array}{lll}
\hat{f}+\hat{n}, & (u, v)\in \Omega,\\
0,& (u, v)\in \Omega^{c}.
\end{array}
\right.
\label{recover}
\end{eqnarray}
Here, we assumed that the noise $n$ is also bandlimited and $\Omega^{c}$ denotes the set of unobserved frequencies $\R^2 \setminus \Omega$.

Let $\Lambda :=\Omega^{c}$ and $N_{(u, v)}$ denote its cardinality.
As we see, the data $\{\hat{r}(u, v): (u, v)\in \Lambda \}$ are not observed.
The receiver's aim is to reconstruct the discrete-time signal $f$
from the noisy  observed signal $r$. Although it may seem that it is impossible, the uncertainty principles says recovery is possible
provided that  $2N_{(t, s)} \cdot N_{(u, v)}<MN$ . Here $N_{(t, s)}$ and $N_{(u, v)}$ are the numbers of nonzero elements of sequences $\{f(t, s)\}$ $(t=0, 1, \cdots, M-1, s=0, 1, \cdots, N-1)$ and $\{\hat{f}(u, v)\}$ $(u=0, 1, \cdots, M-1, v=0, 1, \cdots, N-1)$, respectively. Donoho and Stark in \cite{DS1989} proved this result in the one dimensional case.
\begin{thm} \label{yk5th3} Suppose there is no noise in (\ref{yk5eq10}), that is $r=P_{\Omega}f$. If $f$ has only $N_{(t, s)}$ nonzero elements and if
\begin{equation}\label{yk5eq11}
2N_{(t, s)} \cdot N_{(u, v)}<MN,
\end{equation}
then $f$ can be uniquely reconstructed from $r$.
\end{thm}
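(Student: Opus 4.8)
The plan is to follow the classical Donoho--Stark uniqueness argument, adapted to the quaternionic two-sided DQFT, and to reduce it to the discrete uncertainty principle already established in Theorem \ref{yk5th1}. First I would argue by contradiction: suppose two signals $f$ and $g$ are both supported on at most $N_{(t,s)}$ points and both satisfy $P_\Omega f = P_\Omega g = r$. Set $h := f - g$. Then $h$ is supported on at most $2N_{(t,s)}$ points (the union of the two supports), so the number of nonzero elements of $h$ is at most $2N_{(t,s)}$.

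Next I would examine the frequency side. Since $P_\Omega f = P_\Omega g$ and $P_\Omega$ is the operator defined in (\ref{POmega}) that keeps only the frequencies in $\Omega$, linearity gives $P_\Omega h = 0$, which by (\ref{recover}) means $\hat h(u,v) = 0$ for all $(u,v) \in \Omega$; that is, $\hat h$ is supported inside $\Lambda = \Omega^c$, so $\hat h$ has at most $N_{(u,v)}$ nonzero elements. Here one should be slightly careful about what $N_{(u,v)}$ denotes — in the setup preceding the theorem it is the cardinality of $\Lambda = \Omega^c$, so the count of nonzeros of $\hat h$ is bounded by this quantity.

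Now apply Theorem \ref{yk5th1} to $h$: if $h \neq 0$ then the product of its time-nonzero count and its frequency-nonzero count is at least $MN$. Combining with the two bounds just obtained yields
\begin{equation*}
MN \leq (\text{nonzeros of } h)\cdot(\text{nonzeros of }\hat h) \leq 2N_{(t,s)} \cdot N_{(u,v)}.
\end{equation*}
This directly contradicts the hypothesis (\ref{yk5eq11}), namely $2N_{(t,s)} \cdot N_{(u,v)} < MN$. Hence $h = 0$, i.e.\ $f = g$, which proves uniqueness of the reconstruction.

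\textbf{Where the difficulty lies.} The logical skeleton is routine once Theorem \ref{yk5th1} is in hand; the one genuinely delicate point is the noncommutativity of $\mathcal H$, which must be handled when asserting that $P_\Omega$ is $\R$-linear (indeed even $\mathcal H$-linear on the appropriate side) so that $P_\Omega f - P_\Omega g = P_\Omega(f-g)$ and $\widehat{f-g} = \hat f - \hat g$: these follow because the DQFT in (\ref{yk5eq1}) is additive and the kernels $e^{-2\pi\i ut/M}$ and $e^{-2\pi\j vs/N}$ sit on fixed sides of $f(t,s)$, so subtraction passes through without reordering any quaternion products. A secondary point worth a sentence is to confirm that ``$f$ can be uniquely reconstructed from $r$'' is exactly the statement that no two admissible $N_{(t,s)}$-sparse signals share the same $P_\Omega$-image, so that the contradiction argument is precisely what is required; one need not exhibit a reconstruction algorithm for the uniqueness claim, though one could remark that $\ell^0$-minimization over signals consistent with $r$ selects $f$.
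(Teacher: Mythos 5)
Your proposal is correct and follows essentially the same route as the paper: form $h=f-f_1$, bound its time-domain support by $2N_{(t,s)}$ and its frequency-domain support by $N_{(u,v)}=|\Omega^c|$, and invoke Theorem \ref{yk5th1} to force $h=0$ under hypothesis (\ref{yk5eq11}). The paper's proof additionally sketches a brute-force combinatorial least-squares search as a (conceptual) reconstruction procedure, but as you correctly note this is not needed for the uniqueness claim itself.
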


\begin{proof} To prove this, we first show that $f$ is the unique sequence satisfying the condition (\ref{yk5eq11}) that can generate the given data $r$.
Suppose there is another sequence $f_1$ which also generates $r$, i.e.,  $P_{\Omega}f=r=P_{\Omega}f_1$. Let $h :=f-f_1$, we have $P_{\Omega}h=0$. Since $f$ and $f_1$ have at most $N_{(t, s)}$ nonzero elements, clearly, $h$ has fewer than $N'_{(t, s)}=2N_{(t, s)}$ nonzero elements. On the other hand, $P_{\Omega}h=0$, we have $\hat{h}(u, v)=0$, for $(u, v)\in \Omega $. Therefore the DQFT of $h$ has at most $N_{(u, v)}$ nonzero elements. Then $h$ must be zero, for otherwise it would be a contradiction with the discrete-time uncertainty principle \ref{yk5th1} (Here $N'_{(t, s)} \cdot N_{(u, v)}\leq MN$). Thus $f=f_1$. It establishes the uniqueness.

To reconstruct $f$ from observed $r$, a ideal closest point algorithm could be used.
Let $N_{(t, s)}$ be given and denote $\prod$ be the
subsets $\tau$ of $\{0, 1, \cdots, MN-1\}$ having $N_{(t, s)}$ elements. For a given subsets $\tau\in \prod$, let $\tilde{f_{\tau}}$ be the sequence supported on $\tau$, which is closed to generating the observed signal $r$, i.e.
\begin{eqnarray}
\tilde{f_{\tau}} =\argmin_{\tau\in \prod} \|r-P_{\Omega}f'\|,~~s.t.~~P_{\tau}f'=f.
\end{eqnarray}
For a fixed $\tau\in\prod$, we merely have to find that \begin{eqnarray}
\tilde{f}=\argmin_{\tilde{f_{\tau}},\tau\in\prod}
\{\|r-P_{\Omega}\tilde{f}_{\tau}\|\}.
\end{eqnarray}
This step requires solving ${N_{(t, s)}}\choose{MN}$ sets of linear least-squares problems, each one requiring $\mathcal{O}((MN)^3)$ operations, therefore it is totally impractical for large $N_{(t, s)}$. It completes the proof and this theorem establishes uniqueness.
\end{proof}

In the following, one establishes stability in the presence of noise.
\begin{thm}
Suppose that $f$ has at most $N_{(t, s)}$ nonzero elements, with $$2N_{(t, s)} \cdot N_{(u, v)}<MN.$$ Assume that the norm of the noise is small, i.e., $\|n\|\leq \varepsilon$. If $\tilde{f}$ has at most $N_{(t, s)}$ nonzero elements and satisfies
\begin{equation}\label{ma1}
\|r-P_{\Omega}\tilde{f}\|\leq \varepsilon,
\end{equation}
then
$$\|f-\tilde{f}\|\leq \frac{2\varepsilon}{\sqrt{1-\frac{2N_{(t, s)}N_{(u, v)}}{MN}}}.$$
\end{thm}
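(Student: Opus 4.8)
The plan is to carry the classical Donoho--Stark stability argument \cite{DS1989} over to the quaternionic two-sided setting, the only genuinely new ingredient being a quaternion Plancherel identity for the DQFT. Write $h := f - \tilde{f}$. Since $f$ and $\tilde{f}$ each have at most $N_{(t,s)}$ nonzero entries, the sequence $h$ is supported on a set $T$ of indices $(t,s)$ with $|T|\le 2N_{(t,s)}$. The first step is to control the passband part of $h$: from $r=P_{\Omega}f+n$, $\|n\|\le\varepsilon$ and the constraint (\ref{ma1}) we get, by the triangle inequality,
\[
\|P_{\Omega}h\|=\|P_{\Omega}f-P_{\Omega}\tilde{f}\|\le\|P_{\Omega}f-r\|+\|r-P_{\Omega}\tilde{f}\|=\|n\|+\|r-P_{\Omega}\tilde{f}\|\le 2\varepsilon .
\]

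Next I would record the Plancherel identity $\sum_{t,s}|g(t,s)|^{2}=\sum_{u,v}|\hat{g}(u,v)|^{2}$; it follows from the matrix form $\hat{A}=V_{\i}AV_{\j}$ together with the unitarity of $V_{\i}$ and $V_{\j}$ and the cyclicity of the real part of the trace over $\H$. Applying it to the splitting of $\hat{h}$ into its restriction to $\Omega$ and its restriction to $\Omega^{c}=\Lambda$ (each being, via the inversion formula (\ref{yk5eq2}), the DQFT of $P_{\Omega}h$, resp. $P_{\Omega^{c}}h$) yields the orthogonal decomposition
\[
\|h\|^{2}=\|P_{\Omega}h\|^{2}+\|P_{\Omega^{c}}h\|^{2}.
\]
The crux is now to bound $\|P_{\Omega^{c}}h\|$ by exploiting that $h$ is time-sparse. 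Because $|pq|=|p|\,|q|$ on $\H$ and each exponential factor is a unit quaternion, the DQFT formula (\ref{yk5eq1}) and the Cauchy--Schwarz inequality give, for every $(u,v)$,
\[
|\hat{h}(u,v)|\le\frac{1}{\sqrt{MN}}\sum_{(t,s)\in T}|h(t,s)|\le\frac{\sqrt{|T|}}{\sqrt{MN}}\,\|h\|\le\sqrt{\frac{2N_{(t,s)}}{MN}}\;\|h\| .
\]
Summing the squares over the $N_{(u,v)}$ frequencies of $\Lambda$ and using Plancherel on the left gives $\|P_{\Omega^{c}}h\|^{2}\le \frac{2N_{(t,s)}N_{(u,v)}}{MN}\,\|h\|^{2}$.

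Finally, inserting the two estimates into the decomposition,
\[
\|h\|^{2}\le 4\varepsilon^{2}+\frac{2N_{(t,s)}N_{(u,v)}}{MN}\,\|h\|^{2},
\]
and the hypothesis (\ref{yk5eq11}), i.e. $2N_{(t,s)}N_{(u,v)}<MN$, makes $1-\frac{2N_{(t,s)}N_{(u,v)}}{MN}$ a strictly positive real number, so solving for $\|h\|$ gives exactly $\|f-\tilde{f}\|=\|h\|\le \dfrac{2\varepsilon}{\sqrt{\,1-2N_{(t,s)}N_{(u,v)}/(MN)\,}}$. The step I expect to require the most care is the Plancherel identity and the attendant orthogonal frequency splitting: because the transform is \emph{two-sided} (a left $\i$-exponential and a right $\j$-exponential), one cannot simply quote the complex theory, and one must check that the quaternion-valued pairing underlying $\|\cdot\|$ still obeys Cauchy--Schwarz and that $P_{\Omega}$ and $P_{\Omega^{c}}$ act as complementary orthogonal projections in the quaternion $\ell^{2}$ sense; once this is settled, the chain of inequalities above is routine.
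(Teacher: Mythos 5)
Your proposal is correct and follows essentially the same route as the paper: the Donoho--Stark stability argument with the splitting $\|h\|^2=\|P_{\Omega}h\|^2+\|P_{\Omega^c}h\|^2$, the triangle-inequality bound $\|P_{\Omega}h\|\le 2\varepsilon$, and the sparsity bound on the unobserved band. In fact your write-up is more complete than the paper's in the one place that matters: the paper simply asserts the operator-norm estimate $\|P_W P_T\|^2\le 2N_{(t,s)}N_{(u,v)}/(MN)$ (amid some confused notation, e.g.\ $P_B$ and ``$P_W=I-P_T$'' where the frequency projection onto $\Omega^c$ is meant), whereas you derive it explicitly from the pointwise bound $|\hat h(u,v)|\le\sqrt{|T|/(MN)}\,\|h\|$ together with the quaternion Plancherel identity for the two-sided DQFT, which you correctly identify as the step needing verification and which does hold since $V_{\i}$ and $V_{\j}$ are unitary.
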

\begin{proof}
Let $T$ denote the support of $f-\tilde{f}$, then the cardinality of $T$ is at most $N'_{(t, s)}=2N_{(t, s)}$. Denote by $P_{T}$ the operator that timelimited a sequence on $T$. We have
\begin{equation}\label{ma2}
\|f-\tilde{f}\|^2=\|P_B(f-\tilde{f})\|^2+\|(I-P_B)(f-\tilde{f})\|^2.
\end{equation}
As a consequence of triangle inequality, the hypothesis condition $\|n\|\leq\varepsilon$ and inequality (\ref{ma1}), we have
\begin{eqnarray}\label{ma3}
\|P_T(f-\tilde{f})\|^2&=&\|P_T(f)-r+r-P_T(\tilde{f})\|^2\nonumber\\
&\leq&(\|P_T(f)-r\|+\|r-P_T(\tilde{f})\|)^2\nonumber\\
&=&(\|n\|+\varepsilon)^2\nonumber\\
&\leq& 4\varepsilon^2.
\end{eqnarray}
Let $P_W=I-P_T$. Then the second term of (\ref{ma2}) is
\begin{eqnarray}\label{ma4}
\|P_W(f-\tilde{f})\|^2&=&\|P_WP_T(f-\tilde{f})\|^2\nonumber\\
&\leq&\|P_WP_T\|^2\|f-\tilde{f}\|^2\nonumber\\
&\leq& \frac{2N_{(t, s)}N_{(u, v)}}{MN}\|f-\tilde{f}\|^2.
\end{eqnarray}
Combining (\ref{ma2})-(\ref{ma4}), we obtain
$$\|f-\tilde{f}\|^2\leq\frac{4\varepsilon^2}{1-\frac{2N_{(t, s)}N_{(u, v)}}{MN}}.$$ This completes the proof.
\end{proof}

\begin{ex}
For an image with size $M=N=400$, we can also find the uncertainty principle
in the image recovery processing in Fig. \ref{fig:sec4}.
For the original Lena $a1$ and different bandlimited Lena $b1$ and $c1$ in Fig. \ref{fig:sec4},
they are recovered with different numbers of $N_{(t, s)}$ and  $N_{(u, v)}$.
The results show that for different numbers of $N_{(t, s)}$ and  $N_{(u, v)}$ the PSNR and SSIM are different.
For an image with the numbers of $N_{(t, s)}$ and $N_{(u, v)}$ are smaller, the recovery results are worse.
From the Table \ref{tab1}, we can show the recovery results in data,
 the bigger of  PSNR and SSIM, the quality of images are better.
\begin{table}[!h]
\footnotesize
\caption{The number of $N_{(t, s)}$ and  $N_{(u, v)}$ in the Fig. \ref{fig:sec4}.}
\begin{center}
\begin{tabular}
{c|c|c|c|c|c}
\hline\cline{1-6}
Image size  &$N_{(t, s)}$   &$N_{(u, v)}$ &Uncertainty Principle  &PSNR  &SSIM
\\\hline
(a) $400\times400$ &$160000$  &$82057$  &$160000\times82057\geq400\times400$  &$33.4804$  &$0.9985$
\\
(b) $400\times400$ &$79841$  &$46396$  &$79841\times46396\geq400\times400$  &$26.4401$  &$0.4592$
\\
(c) $400\times400$ &$79800$  &$65990$  &$79800\times65990\geq400\times400$  &$27.7791$  &$0.7151$
\\ \hline\cline{1-6}
\end{tabular}
\end{center}
\label{tab1}
\end{table}

\begin{figure}
  \centering
   \includegraphics[height=7cm]{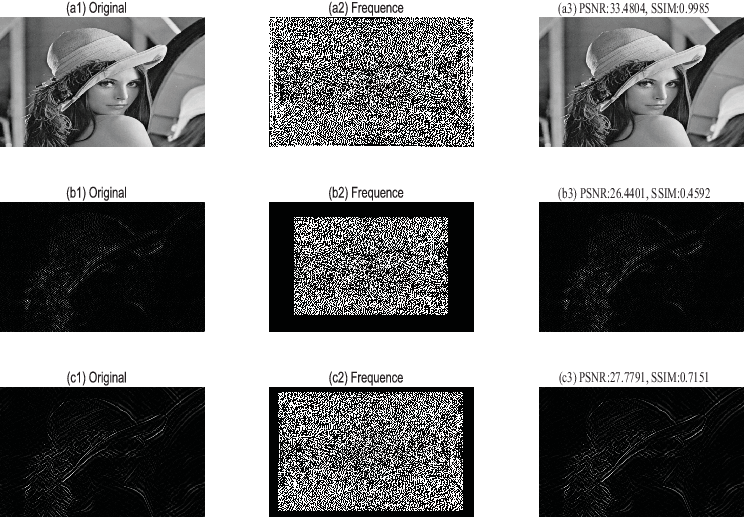}
 \caption{The cases of $N_{(t, s)}$ and  $N_{(u, v)}$ for an example of $\{f(t, s)\}$ with $M=N=2$.
 }
\label{fig:sec4}
\end{figure}

\end{ex}


\section*{Acknowledgment}
The authors acknowledge financial support from Science and Technology Program of Guangzhou, China (No. 74120-42050001), the National Natural Science Foundation of China under Grant (No. 61806027), Macao Science and Technology Development Fund (FDCT/031/2016/A1 and FDCT/085/2018/A2)



\begin{thebibliography}{12}

\bibitem{BLM2017} A. S. Bandeira, M. E. Lewis, D. G. Mixon, \textit{Discrete Uncertainty Principle and Sparse Siganl Processing}, Journal of Fourier Analysis and Applications, 2017.
DOI 10.1007/s00041-017-9550-x

\bibitem{BTN2007} E. Bayro-Corrochano, N. Trujillo, M. Naranjo, \textit{Quaternion Fourier descriptors for preprocessing
and recognition of spoken words using images of spatiotemporal
representations}, Journal of Mathematical Imaging and Vision 28(2):
179-190 (2007).

\bibitem{BLC2003} P. Bas, N. Le Bihan, J. M. Chassery, \textit{Color image watermarking using quaternion Fourier transform}
in Proceedings of the IEEE International Conference on Acoustics
Speech and Signal and Signal Processing, ICASSP, Hong-kong, 521-524
(2003).

\bibitem{B1999} T. B$\ddot{u}$low, \textit{Hypercomplex spectral signal representations for the processing and analysis
of images}, Ph.D. Thesis, Institut f$\ddot{u}$r Informatik und
Praktische Mathematik, University of Kiel, Germany, (1999).

\bibitem{CT2006} E. J. Candes, T. Tao, \textit{Near-optimal signal recovery from random projections: Universal encoding strategies}, IEEE Trans. Inf. Theory, 52: 5406-5425 (2006).

\bibitem{DS1989} D. L. Donoho and P. B. Stark, \textit{Uncertainty principles and signal
recovery}, SIAM Journal on Applied Mathematics,  49(3): 906-931 (1989).

\bibitem{E1993} T. A. Ell, \textit{Quaternion-fourier transfotms for analysis of two-dimensional linear time-invariant
partial differential systems}, in: Proceeding of the 32nd Conference
on Decision and Control, San Antonio, Texas, 1830-1841 (1993).

\bibitem{H1927} W. Heisenberg, \textit{$\ddot{U}$ber den anschaulichen Inhalt der quantentheoretischen Kinematik und Mechanik}, Z. Phys.(in German) 43: 172-198 (1927).

\bibitem{HK2018} X. X Hu, K. I. Kou, \textit{Phase based edge detection algorithms}, Mathematical Methods in the Applied Sciences 41(11): 4148-4169 (2018).


\bibitem{K2017} K. I. Kou, Y. Yang and C.M. Zou, \textit{Uncertainty principle for measurable sets and signal recovery in quaternion domains}, Mathematical Methods in the Applied Sciences 40(11): 3892-3900 (2017).

\bibitem{KLMZ2017} K. I. Kou, M. S. Liu, J. P. Morais, C. M. Zou, \textit{Envelope detection using generalized analytic signal in 2D QLCT domains}, Multidimensional Systems and Signal Processing 28(4): 1343-1366 (2017).

\bibitem{S1979} A. Sudbery. \textit{Quaternionic analysis}. Math. Proc. Cambridge Phil. Soc. 85: 199-225 (1979).

\bibitem{SE2007} S. J. Sangwine, T. A. Ell, \textit{Hypercomplex Fourier transforms of color images}, IEEE Transactions on Image Processing 16(1): 22-35 (2007).

\bibitem{S1997} S. J. Sangwine. \textit{The discrete quaternion Fourier transform}, 1997 Sixth International Conference on Image Processing and Its Applications, Dublin, Ireland, 1997, pp. 790-793 vol.2.
doi: 10.1049/cp:19971004

\bibitem{T2008} J. A. Tropp, \textit{On the linear independence of spikes and sines}, J. Fourier Appl. 14: 838-858 (2008).
\end{thebibliography}
\end{document}